\newtheorem{theorem}{Theorem}
\newtheorem{definition}{Definition}
\newtheorem{corollary}[theorem]{Corollary}
\newtheorem{lemma}[theorem]{Lemma}
\newtheorem{proposition}[theorem]{Proposition}
\numberwithin{equation}{section}
\numberwithin{theorem}{section}
 \newcommand\datver[1]{\def\datverp%
 {\par\boxed{\boxed{\text{Version: #1; Run: \today}}}}}
\definecolor{darkgreen}{cmyk}{1,0,1,.2}
\definecolor{m}{rgb}{1,0.1,1}
\renewcommand{\bar}{\overline}
\renewcommand{\hat}[1]{\widehat{#1}}
\newcommand{\script}[1]{\textsc{#1}}
\newcommand{\wt}[1]{\widetilde{#1}}
\newcommand\eps\varepsilon
\newcommand\pa{\partial}
\newcommand\ie{\operatorname{ie}}
\newcommand\iie{\operatorname{iie}}
\newcommand\Ie{{}^{\ie}}
\newcommand\Iie{{}^{\iie}}
\newcommand \C {\mathbb{C}}
\newcommand\ch{\operatorname{ch}}
\newcommand\CI{{\mathcal{C}}^{\infty}}
\newcommand{\lrpar}[1]{\left( #1 \right)}
\DeclareMathOperator*{\btimes}{\times} 
\newcommand{\Ch}{\operatorname{Ch}}
\newcommand{\Dom}{\operatorname{Dom}}
\newcommand\id{\operatorname{id}}
\newcommand\Id{\operatorname{Id}}
\newcommand{\Ind}{\operatorname{Ind}}
\newcommand{\sign}{\operatorname{sign}}
\newcommand\Mand{\text{ and }}
\DeclareMathAlphabet{\mathpzc}{OT1}{pzc}{m}{it}
\newcommand\paperintro%
\newcommand\paperbody%
\newcommand\bbC{\mathbb{C}}
\newcommand\bbD{\mathbb{D}}
\newcommand\bbK{\mathbb{K}}
\newcommand\bbN{\mathbb{N}}
\newcommand\bbQ{\mathbb{Q}}
\newcommand\bbR{\mathbb{R}}
\newcommand\bbS{\mathbb{S}}
\newcommand\bbZ{\mathbb{Z}}
\newcommand\cM{\mathcal{M}}
\newcommand\cN{\mathcal{N}}
\newcommand\cS{\mathcal{S}}
\newcommand\cT{\mathcal{T}}
\newcommand\cU{\mathcal{U}}
\newcommand\cV{\mathcal{V}}
\begin{document}
\title
[The signature package on Witt spaces, II.]
{The signature package on Witt spaces, II. \\Higher signatures. }

\author{Pierre Albin}
\address{Courant Institute and Institute for Advanced Study }
\email{albin@courant.nyu.edu}
\author{Eric Leichtnam}
\address{CNRS Institut de Math\'ematiques de Jussieu}
\author{Rafe Mazzeo}
\address{Department of Mathematics, Stanford University}
\email{mazzeo@math.stanford.edu}
\author{Paolo Piazza}
\address{Dipartimento di Matematica, Sapienza Universit\`a di Roma}
\email{piazza@mat.uniroma1.it}


\begin{abstract}
This is a sequel to the paper ``The signature package on Witt spaces, I.  Index classes'' by the same authors.
In the first part we investigated, via a parametrix construction, the regularity properties of the signature operator on a stratified Witt pseudomanifold, proving, in particular,  that one can define a K-homology 
signature class. We also established the existence of an analytic index class for the signature operator twisted by a $C^*_r\Gamma$ Mischenko bundle  and proved that the K-homology signature class
is mapped to the signature index class by the assembly map. In this paper we continue our study, showing  that the signature index class is invariant under rational Witt bordisms and stratified homotopies. We are also able
to identify this analytic class with the topological analogue of the Mischenko symmetric signature recently defined by Banagl. Finally, we define Witt-Novikov  higher signatures and show that our analytic results imply a purely topological theorem, namely that the Witt-Novikov higher signatures  are stratified homotopy invariants if the assembly map in K-theory is rationally injective.
\end{abstract}

\maketitle

\tableofcontents

\section{Introduction}

This is the second of two papers devoted to  what we have called the signature package
on Witt spaces. For the benefit of the reader, we
restate  the statement of the signature package \footnote{In this second paper we have slightly modified the
statement of the signature package, giving more weight to the analytic proof of the (stratified)
homotopy invariance of the signature index class}.

Let $\hat X$ be an orientable Witt pseudomanifold with fundamental group $\Gamma$. 
We endow the regular part of $\hat X$ with an iterated conic metric $g$.
Let $\hat X ^\prime$ 
be a Galois $\Gamma$-covering over $\hat X$ and $r: \hat X\to B\Gamma$ a classifying map for 
 $\hat X ^\prime$ . 
The signature package for the pair $( \hat X  ,r: \hat X  \to B\Gamma)$ is the following list of results:

\begin{enumerate}
\item the signature operator defined by the iterated conic metric $g$
with values in the Mischenko bundle $r^* E\Gamma \times_\Gamma C^*_r\Gamma$
defines  a signature index class  $\Ind (\wt \eth_{\sign})\in K_* (C^*_r \Gamma)$, $* \equiv \dim  \hat X  \;{\rm mod}\; 2$; 
\item the signature index class is a rational Witt bordism invariant; more precisely  it defines a group homomorphism 
$\Omega^{{\rm Witt}}_* (B\Gamma) \to K_* (C^*_r \Gamma)\otimes \bbQ$;
\item the signature 
index class is a stratified homotopy invariant; 
\item  there is a  K-homology signature class $[\eth_{\sign}]\in K_* ( \hat X )$ whose Chern
 character is, rationally, the  homology L-Class of a Witt space;
\item the assembly map $\beta: K_* (B\Gamma)\to K_* (C^*_r\Gamma)$ sends the
class $ r_* [\eth_{\sign}]$ into $\Ind (\wt \eth_{\sign})$;
\item using the homology L-class one can define Witt-Novikov  higher signatures and 
if  the assembly map  is rationally injective these are 
stratified homotopy invariants.
\end{enumerate}

We label the {\em full} signature package the one decorated by the following item

\medskip
 \begin{list}
 {(7)} \item There is a topologically defined $C^*$-algebraic symmetric signature 
 $\sigma^{{\rm Witt}}_{C^*_r \Gamma}( \hat X ,r)$\\$\in$ $  K_* (C^*_r \Gamma)$; moreover 
 $\sigma^{{\rm Witt}}_{C^*_r \Gamma}( \hat X ,r)=\Ind (\wt \eth_{\sign})$ rationally.
   \end{list}

\medskip
Our first paper was devoted to a detailed study of the analytic properties of the signature operator
defined by an iterated conic metric on a Witt space.
The results proved there establish, in particular, item 1), the first part of item 4) (i.e. the existence of a well defined K-homology class)
as well as item 5). See the next Section for precise statements.
In this second paper we will establish the rest of the (full) signature package.
We shall use freely the notations of  part I.

\bigskip
\noindent
{\bf Acknowledgements.} 

P.A., R.M. and P.P. all wish to thank MSRI for hospitality and financial support during the Fall Semester of 2008
when much of the latter stages of this paper were completed. P.A. was partly supported by NSF grant DMS-0635607002 and by an NSF Postdoctoral 
Fellowship, and wishes to thank Stanford for support during visits; R.M. was supported by NSF grants 
DMS-0505709 and DMS-0805529, and enjoyed the hospitality and financial support of MIT, 
Sapienza Universit\`a di Roma and the Beijing International Center for Mathematical Research;
P.P. wishes to thank the CNRS and  Universit\'e Paris 6 for financial support during  visits to Institut 
de Math\'ematiques de Jussieu in Paris. E.L was  partially supported during visits to 
Sapienza Universit\`a di Roma by CNRS-INDAM (through the bilateral agreement GENCO (Non commutative 
Geometry)) and the Italian {\it Ministero dell' Universit\`a  e della  Ricerca Scientifica} (through the 
project "Spazi di moduli e teoria di Lie").

We thank Markus Banagl for many interesting discussions on the
notion of symmetric signature on Witt spaces; we thank Shmuel Weinberger for suggesting the
proof of Proposition \ref{prop:magic}.

\section{K-theory classes defined by the signature operator}
 
  In this Section  we recall those results of  Part 1 that are directly involved in the signature
package.
 First of all, we have a K-homology signature class:
 
\begin{theorem}\label{theo:k-homology}
The signature operator $\eth_{\sign}$ associated to a Witt space 
$\widehat{X}$ endowed with an iterated conic metric $g$ defines an unbounded Fredholm module
for $C(\widehat{X})$ and thus a class   $[\eth_{\sign}]\in KK_* (C(\widehat{X}),\bbC)$, $* \equiv\dim X \,{\rm mod} \,2$.
Moreover, the class  $[\eth_{\sign}]$ does not depend on the choice of iterated conic metric on $\widehat{X}$.
\end{theorem}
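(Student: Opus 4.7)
The plan is to verify the axioms of an unbounded Kasparov module in the sense of Baaj--Julg, and then prove metric independence by a homotopy argument.

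First, for the existence of the class, I would exhibit the triple $(C(\widehat{X}), L^2(\reg{\widehat X}; S), \eth_{\sign})$ as an unbounded Kasparov $(C(\widehat{X}),\mathbb{C})$-module. The analytic heart of this step is already provided by Part I: the signature operator $\eth_{\sign}$ with the Cheeger ideal (a.k.a. Friedrichs/minimal=maximal on a Witt space) domain is essentially self-adjoint on the regular part, and the parametrix construction produces a compact resolvent $(\eth_{\sign}\pm i)^{-1}$ — this is exactly the content of the Fredholm and spectral statements recalled from Part I. The representation $\pi: C(\widehat{X}) \to \mathcal{B}(L^2)$ is by multiplication (lifted from $\widehat{X}$ to $\reg{\widehat{X}}$). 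The remaining requirement is that the dense subalgebra $\mathcal{A} \subset C(\widehat{X})$ of functions that are Lipschitz with respect to the iterated conic distance has the property that $[\eth_{\sign}, \pi(f)]$ extends to a bounded operator for $f \in \mathcal{A}$: this is immediate from the principal-symbol computation, since on $\reg{\widehat{X}}$ this commutator is Clifford multiplication by $df$, and $|df|_g \in L^\infty$ precisely when $f$ is Lipschitz for $g$. Density of $\mathcal{A}$ in $C(\widehat{X})$ follows from Stone--Weierstrass (the iterated conic metric completes $\reg{\widehat{X}}$ to $\widehat{X}$, so Lipschitz functions separate points). Thus Baaj--Julg delivers a class $[\eth_{\sign}] \in KK_*(C(\widehat{X}),\mathbb{C})$ of the appropriate parity.

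For metric independence, let $g_0,g_1$ be two iterated conic metrics on $\reg{\widehat{X}}$, and set $g_t := (1-t)g_0 + tg_1$. This convex combination is again an iterated conic metric (convexity is preserved stratum by stratum), so Part I applies uniformly to the family $\{\eth_{\sign}(g_t)\}_{t\in[0,1]}$. I would then pass to the bounded transform $F_t := \eth_{\sign}(g_t)(1+\eth_{\sign}(g_t)^2)^{-1/2}$, acting on a common Hilbert space (using the unitary identifications induced by the positive square roots of the metric densities), and argue that $t \mapsto F_t$ is norm continuous and that $t\mapsto [F_t, \pi(f)]$ is also norm continuous for $f$ in the joint Lipschitz algebra $\mathcal{A}_0 \cap \mathcal{A}_1$. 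Once this continuity is in hand, the pair $(F_t, \pi)$ is a continuous path of bounded Kasparov modules over $C(\widehat{X})$, so $[\eth_{\sign}(g_0)] = [\eth_{\sign}(g_1)]$ in $KK_*(C(\widehat{X}),\mathbb{C})$.

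The main obstacle is the continuity of $F_t$ and its commutators in the parameter $t$, which requires the parametrix and resolvent bounds from Part I to be \emph{uniform} for a smooth one-parameter family of iterated conic metrics. This is not automatic: one must check that the incomplete iterated-edge calculus used in Part I carries a family version, so that the heat semigroup (or resolvent) $e^{-s\eth_{\sign}(g_t)^2}$ depends continuously on $t$ in the operator norm on $L^2$, and commutes appropriately with multiplication by Lipschitz functions up to norm-continuous error. Given such uniformity, the functional-calculus identity
\[
F_t - F_s \;=\; \frac{1}{\pi}\int_0^\infty \lambda^{-1/2}\bigl[(\lambda+\eth_{\sign}(g_t)^2)^{-1} - (\lambda+\eth_{\sign}(g_s)^2)^{-1}\bigr]\,d\lambda \, \eth_{\sign}(g_t) + \cdots
\]
plus a second-resolvent expansion reduces everything to resolvent continuity, and the same strategy controls the commutators. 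Alternatively, one may invoke the general principle that a continuous family of unbounded Kasparov modules defines a single $KK$-class, once the necessary Part I estimates have been verified to hold uniformly along the family.
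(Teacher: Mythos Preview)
Your proposal is a reasonable sketch of how such a result is typically established, but there is nothing in this paper to compare it against: Theorem~\ref{theo:k-homology} is not proved here. Section~2 opens with the sentence ``In this Section we recall those results of Part~1 that are directly involved in the signature package,'' and the theorem is simply restated from \cite{ALMP1} without proof. The actual argument lives entirely in Part~I, where the parametrix construction in the iterated incomplete edge calculus is carried out in detail.

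That said, your outline is broadly in the right spirit and likely close to what Part~I does. A few remarks. For the first part, the delicate issue is not the Baaj--Julg axioms per se but the essential self-adjointness (equivalently, the uniqueness of the closed extension) of $\eth_{\sign}$ on a Witt space; this is where the Witt condition enters and is the analytic core of Part~I. You correctly defer to Part~I for this. For the second part, your convex-combination homotopy is the natural move, and your identification of the obstacle---uniformity of the resolvent/parametrix estimates in the family parameter---is exactly right. One should also check that the convex combination $(1-t)g_0 + tg_1$ really is an iterated conic metric in the sense used in Part~I (this requires the two metrics to have compatible product-type structures near the strata, or at least that the class of admissible metrics is convex), which you assert but do not justify. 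In practice Part~I likely handles metric independence either by this homotopy or by observing that any two such metrics are quasi-isometric and appealing to the stability of the K-homology class under bounded perturbations; you would need to consult \cite{ALMP1} directly to see which route is taken.
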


Assume now that we are also given a continuous map $r:\widehat{X}\to B\Gamma$ and let
$\Gamma \to \widehat{X}^\prime \to \widehat{X}$ the Galois $\Gamma$-cover induced by $E\Gamma\to B\Gamma$. 
We consider the Mishchenko bundle
\begin{equation*}
	\wt{C^*_r}\Gamma: = C^*_r\Gamma \btimes_\Gamma \widehat{X}^\prime.
\end{equation*} 
and the signature operator with values in the restriction of $\wt{C^*_r}\Gamma$ to $X$, which we denote
by $ \wt \eth_{\sign}$. Our second result is Part 1 was the following theorem together with its corollary

\begin{theorem}\label{theo:kk}
The twisted signature operator $\wt \eth_{\sign}$ and the $C^*_r\Gamma$-Hilbert
module $L^2_{\iie,\Gamma}(X;\Iie\Lambda_\Gamma^*X)$ define an unbounded 
Kasparov $(\bbC,C^*_r\Gamma)$-bimodule and thus a class in $KK_* (\bbC, C^*_r \Gamma)=K_* (C^*_r\Gamma)$.
We call this the index class associated to $\wt \eth_{\sign}$ and denote it by $\Ind (\wt \eth_{\sign})\in K_* (C^*_r\Gamma)$.\\
Moreover, if  we denote by $[[\eth_{\sign}]]\in KK_*(C(\widehat{X})\otimes C^*_r\Gamma, C^*_r\Gamma)$ 
the class obtained from $[\eth_{\sign}]\in KK_*(C(\widehat{X}),\bbC)$ by tensoring with  $C^*_r\Gamma$, then
$\Ind (\wt \eth_{\sign})$ is equal to the Kasparov product of the class defined by Mishchenko bundle 
$[\widetilde{C^*_r}\Gamma]\in  KK_0(\bbC,C(\widehat{X})\otimes C^*_r\Gamma)$ with  $[[\eth_{\sign}]]$:
\begin{equation}\label{tensor}
\Ind (\wt \eth_{\sign})= [\widetilde{C^*_r}\Gamma]\otimes [[\eth_{\sign}]]
\end{equation}
In particular, the index class $\Ind (\wt \eth_{\sign})$ does not depend on the choice of the iterated conic metric.
\end{theorem}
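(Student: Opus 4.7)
The plan is to proceed in two stages: first, establish that $\wt\eth_{\sign}$ defines an unbounded Kasparov $(\bbC,C^*_r\Gamma)$-bimodule by extending the parametrix construction of Part~I to the $C^*_r\Gamma$-valued setting; second, identify the resulting index class with the Kasparov product $[\widetilde{C^*_r}\Gamma]\otimes[[\eth_{\sign}]]$ via an unbounded-product criterion.

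For the first stage, I would lift the iterated edge parametrix construction of Part~I to operators acting on sections twisted by the Mishchenko bundle $\wt{C^*_r}\Gamma$. The crucial observation is that $\wt{C^*_r}\Gamma$ is flat, so in any simply connected chart on $\reg{\hat X}$ it trivializes and $\wt\eth_{\sign}$ locally coincides with $\eth_{\sign}\otimes\Id$. Consequently, the symbol calculation, the normal operator analysis at each stratum, and the Witt-type invertibility of the link operators all transfer verbatim, producing a parametrix $Q$ such that $Q\wt\eth_{\sign}-I$ and $\wt\eth_{\sign}Q-I$ are $C^*_r\Gamma$-compact on the Hilbert $C^*_r\Gamma$-module $L^2_{\iie,\Gamma}(X;\Iie\Lambda^*_\Gamma X)$. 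Combined with the Witt-space regularity statements of Part~I, this yields that $\wt\eth_{\sign}$ is regular, essentially self-adjoint, and has $C^*_r\Gamma$-compact resolvent, so the bounded transform $\wt\eth_{\sign}(1+\wt\eth_{\sign}^2)^{-1/2}$ delivers the desired Kasparov cycle and class $\Ind(\wt\eth_{\sign})\in K_*(C^*_r\Gamma)$.

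For the product formula, I would invoke Kucerovsky's criterion, verifying that $(L^2_{\iie,\Gamma}(X;\Iie\Lambda^*_\Gamma X),\wt\eth_{\sign})$ represents the Kasparov product of the Mishchenko bundle cycle $[\widetilde{C^*_r}\Gamma]$, realized on a Hilbert $C(\hat X)\otimes C^*_r\Gamma$-module of flat $C^*_r\Gamma$-valued sections, with the extended untwisted cycle $[[\eth_{\sign}]]$. The three hypotheses, a connection condition, a domain condition, and a semiboundedness condition on the graded commutator with the lift of $\eth_{\sign}\otimes 1$, all reduce to the flatness of $\wt{C^*_r}\Gamma$: flatness makes the commutator of $\wt\eth_{\sign}$ with multiplication by a local flat section bounded (in fact zero on such sections), and it guarantees the compatibility of the natural connection with the de Rham structure underlying $\eth_{\sign}$. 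Metric independence of $\Ind(\wt\eth_{\sign})$ is then a direct consequence of \eqref{tensor}, since $[\widetilde{C^*_r}\Gamma]$ is topological and $[\eth_{\sign}]$ is metric-independent by Theorem~\ref{theo:k-homology}.

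The main obstacle is the first stage, specifically verifying that the smoothing remainders from the iterated edge parametrix genuinely lie in the ideal of $C^*_r\Gamma$-compact operators on $L^2_{\iie,\Gamma}$, rather than in a merely weaker ideal. On a smooth compact manifold this follows immediately from finite propagation and the standard identification of compact operators on Mishchenko modules, but on an iterated stratified Witt space the integral kernels produced by the edge calculus carry nontrivial singular behaviour at each stratum, and one has to upgrade the regularity and decay estimates of Part~I to norm estimates compatible with the $C^*_r\Gamma$-module structure. Once this compactness is secured, the rest of the argument is essentially formal.
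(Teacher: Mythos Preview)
This theorem is not proved in the present paper at all: it appears in Section~2, which explicitly \emph{recalls} results from Part~I (\cite{ALMP1}) without proof. So there is no proof here to compare your proposal against; the paper simply cites Theorem~7.6 of Part~I for this statement (see the Epilogue, item~(1)).

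That said, your outline is a plausible reconstruction of how such a result is established. Lifting the iterated-edge parametrix through the flat Mishchenko bundle to obtain $C^*_r\Gamma$-compact remainders, and then identifying the index class with the Kasparov product via an unbounded-product criterion (Kucerovsky or a direct Baaj--Julg/Connes--Skandalis argument), is the standard architecture in the closed case and is what one expects Part~I to do in the stratified setting. Your identification of the main difficulty --- showing that the smoothing remainders in the edge calculus are genuinely $C^*_r\Gamma$-compact on the Hilbert module, not just bounded --- is exactly right, and is where the analytic content of Part~I lies. The deduction of metric independence from \eqref{tensor} and Theorem~\ref{theo:k-homology} is also correct and matches how the paper packages the conclusion.
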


\begin{corollary}\label{cor:assembly}
Let $\beta:K_* (B\Gamma)\to K_* (C^*_r\Gamma)$ be the assembly map and $r_* [\eth_{\sign}]\in K_* (B\Gamma)$
the push-forward of the signature K-homology class, then 
\begin{equation}\label{assembly}
\beta(r_* [\eth_{\sign}])=\Ind (\wt \eth_{\sign}) \text{ in }  K_* (C^*_r\Gamma).
\end{equation}
\end{corollary}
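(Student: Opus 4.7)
The corollary follows almost formally from Theorem \ref{theo:kk} once one unpacks the definition of the assembly map via the Kasparov product. The plan is as follows.

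First I would recall the Kasparov-product description of the Baum-Connes assembly map. The universal Mishchenko bundle over $B\Gamma$ defines a class
\[
[\cL_\Gamma] \in KK_0\bigl(\bbC,\, C(B\Gamma) \otimes C^*_r\Gamma\bigr),
\]
and for any $y \in KK_*(C(B\Gamma),\bbC)$ one has
\[
\beta(y) \;=\; [\cL_\Gamma] \otimes_{C(B\Gamma)} [[y]] \;\in\; KK_*(\bbC, C^*_r\Gamma),
\]
where $[[y]] \in KK_*(C(B\Gamma)\otimes C^*_r\Gamma,\, C^*_r\Gamma)$ denotes the class obtained from $y$ by tensoring with $C^*_r\Gamma$, as in Theorem \ref{theo:kk}. (In case $B\Gamma$ is noncompact one works with compact models or with $C_0(B\Gamma)$; since the class $[\eth_{\sign}]$ is pushed forward through the map $r:\hat X \to B\Gamma$ whose image is compact in a CW-model, this causes no difficulty.)

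Next I would observe that by construction the Mishchenko bundle $\wt{C^*_r}\Gamma$ on $\hat X$ is the pullback of $\cL_\Gamma$ along $r$, so that in KK-theoretic language
\[
[\wt{C^*_r}\Gamma] \;=\; r^*[\cL_\Gamma] \;\in\; KK_0\bigl(\bbC,\, C(\hat X)\otimes C^*_r\Gamma\bigr),
\]
where $r^*$ is induced by the $*$-homomorphism $r^*:C(B\Gamma)\to C(\hat X)$. Then the naturality of the Kasparov product with respect to pushforward/pullback along $r$ (that is, the identity $r^*a \otimes_{C(\hat X)} b \;=\; a \otimes_{C(B\Gamma)} r_* b$ for matching classes $a,b$) together with the compatibility of the $\otimes C^*_r\Gamma$ operation with $r_*$ yields
\[
\beta\bigl(r_*[\eth_{\sign}]\bigr)
 \;=\; [\cL_\Gamma] \otimes_{C(B\Gamma)} [[\,r_*\eth_{\sign}\,]]
 \;=\; r^*[\cL_\Gamma] \otimes_{C(\hat X)} [[\eth_{\sign}]]
 \;=\; [\wt{C^*_r}\Gamma] \otimes_{C(\hat X)} [[\eth_{\sign}]].
\]
Invoking the Kasparov product formula \eqref{tensor} of Theorem \ref{theo:kk}, the right-hand side equals $\Ind(\wt\eth_{\sign})$, which concludes the argument.

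There is really no serious obstacle here; all the analytic content sits in the construction of the two KK-classes $[\eth_{\sign}]$ and $[[\eth_{\sign}]]$ and in the product identity of Theorem \ref{theo:kk}, both of which are in hand. The only point requiring any care is the bookkeeping for the naturality of the Kasparov product under $r_*$/$r^*$ and the identification of $[\wt{C^*_r}\Gamma]$ with $r^*[\cL_\Gamma]$; both are standard and can be cited from Kasparov's work.
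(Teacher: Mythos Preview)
Your argument is correct and is exactly the standard deduction from the Kasparov-product formula \eqref{tensor}. Note, however, that the present paper does not actually prove this corollary: Section~2 merely \emph{recalls} it from Part~I (it is Corollary~7.7 there, as indicated in the Epilogue), so there is no ``paper's own proof'' here to compare against. Your unpacking---identifying $[\wt{C^*_r}\Gamma]$ with $r^*[\cL_\Gamma]$ and using naturality of the Kasparov product together with the definition of $\beta$ cited from \cite{Kasparov-contemporary}---is precisely the expected argument and matches how such corollaries are derived in the literature.
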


For the definition of the assembly map $\beta$ see, for example,  \cite{Kasparov-contemporary} (p. 81).

\section{Witt bordism invariance}

Let $\hat Y$ be an oriented odd dimensional Witt space  with boundary $\partial \hat Y= \hat X.$  
We assume that $\hat  Y$ is a smoothly stratified space having a product structure 
near its boundary.
We endow $\hat Y$ with a conic iterated metric having a product structure near $\partial \hat Y= \hat X$  
and inducing a conic iterated metric metric $g$ on $ \hat X.$ Consider 
a reference map $r: \hat Y \rightarrow B \Gamma,$ its restriction to $\hat X$ and $g$ induce a  $C^*_r \Gamma-$linear signature operator on 
$\hat X$. In this Section only we shall be very precise and denote this operator by $\wt \eth_{\sign}( \hat X)$.

\begin{theorem} \label{thm:bordism} We have $ \Ind \wt \eth_{\sign} (\hat X) =0 $ in $K_* (C^*_r \Gamma) \otimes_\bbZ \bbQ.$
\end{theorem}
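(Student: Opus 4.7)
The plan is to combine Corollary \ref{cor:assembly} with the classical fact that the homology L-class of a Witt space is a rational Witt bordism invariant. By the assembly formula \eqref{assembly},
$$\Ind \wt\eth_{\sign}(\hat X) = \beta\big(r_*[\eth_{\sign}(\hat X)]\big),$$
where, by abuse of notation, $r$ also denotes the restriction to $\hat X$ of the reference map $\hat Y \to B\Gamma$. Since $\beta$ is $\bbQ$-linear, it suffices to prove that $r_*[\eth_{\sign}(\hat X)] = 0$ in $K_*(B\Gamma) \otimes \bbQ$.

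By compactness of $\hat Y$, the reference map factors through a finite CW subcomplex $K \subset B\Gamma$ as $r_Y: \hat Y \to K$, with $r_X := r_Y|_{\hat X}$; it is therefore enough to verify that $(r_X)_*[\eth_{\sign}(\hat X)] = 0$ in $K_*(K) \otimes \bbQ$. On the finite complex $K$ the Chern character $\ch: K_*(K) \otimes \bbQ \to H_*(K; \bbQ)$ is an isomorphism. Invoking item (4) of the signature package, namely the identification $\ch[\eth_{\sign}(\hat X)] = L_*(\hat X)$ with the homology L-class of a Witt space, together with naturality of $\ch$ under pushforward, reduces the problem to showing that $(r_X)_* L_*(\hat X) = 0$ in $H_*(K; \bbQ)$.

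This last vanishing is a consequence of Siegel's theorem: the homology L-class of an oriented Witt space descends to a natural transformation $L_*: \Omega^{{\rm Witt}}_*(-) \otimes \bbQ \to H_*(-; \bbQ)$. Since the reference map $r_X$ extends over the Witt bordism $\hat Y$, the pair $(\hat X, r_X)$ is null-bordant rationally in $\Omega^{{\rm Witt}}_*(K) \otimes \bbQ$, and hence $(r_X)_* L_*(\hat X) = 0$, completing the argument. The main technical obstacle in this approach is the Chern character formula $\ch[\eth_{\sign}(\hat X)] = L_*(\hat X)$ for Witt pseudomanifolds: this is a delicate singular analogue of the local index theorem requiring heat-kernel analysis near the singular strata, and a priori it is not yet available at this point in the exposition. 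Should one wish to avoid this dependence (and in particular recover an integral statement), a natural alternative is to construct directly a relative K-homology class on $(\hat Y, \hat X)$ from the signature operator on $\hat Y$ with APS-type boundary conditions, arranged so that it maps to $[\eth_{\sign}(\hat X)]$ under the boundary homomorphism of the long exact sequence of the pair; exactness then forces $i_*[\eth_{\sign}(\hat X)] = 0$ in $K_*(\hat Y)$, and pushing forward by $r_Y$ again yields the theorem.
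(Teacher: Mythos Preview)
Your argument is correct, but it takes a genuinely different route from the paper's. The paper proves the theorem by working directly in $KK(\,\cdot\,, C^*_r\Gamma)$: using the analysis of Part~I it constructs a class $[\wt\eth_{\sign}] \in KK^1(C_{\partial\hat Y}(\hat Y), C^*_r\Gamma)$ from the signature operator on the bounding Witt space $\hat Y$, and then shows via a collar computation (following Higson) that the connecting homomorphism $\delta$ in the long exact $KK$-sequence for the pair $(\hat Y,\partial\hat Y)$ sends this class to $2[\wt\eth_{\sign}(\hat X)]$. Since $i_*\circ\delta=0$ and the index is obtained by pushing forward along $\pi^{\partial\hat Y}=\pi^{\hat Y}\circ i$, one gets $2\,\Ind\wt\eth_{\sign}(\hat X)=0$, whence the rational vanishing. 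This is precisely the ``relative class plus long exact sequence'' strategy you sketch at the end of your proposal, carried out on the $C^*_r\Gamma$-twisted side rather than in untwisted $K$-homology.

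Your main argument instead passes through the assembly map and the Chern character to reduce everything to the classical Witt-bordism invariance of the pushforward of the Goresky--MacPherson $L$-class. This is clean and conceptually transparent, and it makes the r\^ole of Siegel's results explicit; however, as you correctly flag, it imports the Cheeger/Moscovici--Wu identification $\ch_*[\eth_{\sign}]=L_*(\hat X)$, which in the paper's logical order is Theorem~\ref{theo:cmwu} in Section~\ref{sec:Lclass} and hence not yet available. The paper's $KK$-theoretic proof avoids this dependence entirely, needs only the analytic input from Part~I, and moreover explains the factor of~$2$ that forces the result to be stated rationally.
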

\begin{proof} We follow \cite[Section 4.3]{LP-Cut} and Higson \cite[Theorem 5.1]{Higson}. 
Denote by ${\hat Y}^\prime \rightarrow \hat Y$ and ${ \hat X}^\prime \rightarrow \hat X$ the two $\Gamma-$coverings
associated to the reference map $r: \hat Y \rightarrow B \Gamma.$

The analysis of \cite{ALMP1} shows that the operator $\wt \eth_{\sign}( \hat X) $ induces a class $[\wt \eth_{\sign}( \hat X) ]$ in the  
Kasparov group $KK^0 (C_0( \partial {\hat Y}), \C_r^* \Gamma).$ 
In terms of the constant map $\pi^{\partial {\hat Y}}: \partial \hat Y \rightarrow \{ {\rm pt }\},$ one has:
$$
{ \Ind} \, \wt \eth_{\sign} (\hat X ) = \pi^{\partial \hat Y}_*( [\wt \eth_{\sign} ( \hat X )])  \in 
KK^0( \C , \C_r^* \Gamma) \simeq K^0( C^*_r \Gamma).
$$
Now let $C_{ {\partial \hat Y}}({\hat Y}) \subset C({\hat Y})$ denote 
the ideal of continuous functions on $\hat Y$ vanishing on the boundary. Let $i: \partial {\hat Y} \rightarrow {\hat Y}$
denote the inclusion and consider the long exact sequence in $KK (\cdot , \C_r^* \Gamma)$ associated 
to the semisplit short exact sequence:

\begin{equation}
0 \rightarrow C_{ \partial {\hat Y}}({\hat Y}) \overset{j}{\rightarrow} C({\hat Y}) \overset{q}{\rightarrow} C(\partial {\hat Y}) \rightarrow 0
\end{equation}
(see Blackadar \cite[page 197]{Blackadar}). In particular, we have the exactness of
$$
KK^1 (C_{ \partial {\hat Y}}({\hat Y}) , \C^*_r \Gamma) \overset{\delta}{\rightarrow} KK^0 ( C(\partial {\hat Y}), \C^*_r \Gamma) \overset{i_*}{\rightarrow} KK^0 ( C( { \hat Y}), \C^*_r \Gamma)
$$ and thus $i_* \circ \delta=0.$
Recall that the conic iterated metric on $\hat Y$ (with product structure near $\partial \hat Y= \hat X$) allows us 
to define a $C^*_r \Gamma-$linear twisted signature operator $\wt \eth_{\sign}$ on $\hat Y$ with 
coefficients in the bundle ${\hat Y}^\prime \times_\Gamma C^*_r \Gamma \rightarrow \hat Y.$ This twisted signature operator
allows us to define a class $[\wt \eth_{\sign}] \in KK^1(C_{\partial {\hat Y}}({\hat Y}), \C_r^*\Gamma).$
\begin{lemma} One has $\delta  [\wt \eth_{\sign}]= [2 \wt \eth_{\sign} (\hat X ) ].$
\end{lemma}
\begin{proof} We are using  the proof
of Theorem 5.1 of Higson \cite{Higson}. We can replace ${\hat Y}$ by a 
 collar neighborhood $W$ ($\simeq [0,1[ \times \partial {\hat Y}$).
Consider the differential operator $d:$
$$
d=\begin{pmatrix}
	0&  - i \frac{d}{d x }  \\
	- i \frac{d}{d x }  & 0
	\end{pmatrix}
$$ acting on $[0,1].$ It defines a class in $KK^1 ( C_0 (0,1) , \C^*_r \Gamma).$  Recall that the Kasparov product
$[d]\otimes \cdot $ induces an isomorphism:
$$
[d]\otimes \cdot : KK^0(C(\partial {\hat Y}), \C^*_r \Gamma) \rightarrow 
KK^1(C_{\partial {\hat Y}}(W), \C^*_r \Gamma).
$$
As in \cite{Higson}, the connecting map $\delta:$
$$
KK^1(C_{\partial {\hat Y}}(W), \C^*_r \Gamma) \overset{\delta}{\rightarrow} KK^0(C(\partial {\hat Y}), \C^*_r \Gamma)
$$ is given by the inverse of $[d]\otimes \cdot $. Denote by $\mathcal{D}_{W}$ the restriction 
of $\wt \eth_{\sign}$ to $W$ and recall that $\hat X= \partial \hat Y$ is even dimensional. Then one checks (using \cite{Higson} and \cite[page 296]{PRW})   that the $KK-$class 
$[\mathcal{D}_{W}]$ is equal to $[d]\otimes 2 [\wt \eth_{\sign} (\hat X) ],$ and one finds that 
$ \delta [\mathcal{D}_{W} ] = 2 [\wt \eth_{\sign} (\hat X) ]$ which proves the result.
\end{proof}
Let $\pi^{ \hat Y}: \hat Y \rightarrow \{ {\rm pt }\}$ denote the constant map.  By functoriality, one has:
$$
\pi_*^{\partial \hat Y} = \pi_*^{\hat Y}  \circ i_*.
$$ Since $i_* \circ \delta=0$, the previous Lemma implies that:
$$
2 \Ind \wt \eth_{\sign} (\hat X)= \pi_*^{\partial {\hat Y} }(  [2 \wt \eth_{\sign} (\hat X) ] )=
\pi_*^{ \partial \hat Y}  \circ \delta ( [\mathcal{D}_{W} ]  )= 
\pi_*^{\hat Y} \circ  i_* \circ \delta ( [\mathcal{D}_{W} ]  )=0.
$$
Therefore, Theorem \ref{thm:bordism} is proved.
\end{proof}

\bigskip

We shall denote by $\Omega^{{\rm Witt, s}}_* (B\Gamma)$ the bordism group in the category of 
{\it smoothly} stratified oriented Witt spaces. This group is
generated by the elements  of the form
$[\hat X,r: \hat X\to B\Gamma]$ where $[\hat X,r: \hat X\to B\Gamma]$ is equivalent to the zero element  
if $\hat X$ is the boundary of a smoothly stratified Witt oriented space $\hat Y$  (as in Theorem \ref{thm:bordism}) such that the map $r$
extends continuously to $\hat Y.$
It follows that the index map
\begin{equation}\label{bordsim-inv}
\Omega^{{\rm Witt,s}}_* (B\Gamma)\to K_*(C^*_r \Gamma)\otimes \bbQ,
\end{equation}
sending $[\hat X,r: \hat X\to B\Gamma]\in \Omega^{{\rm Witt,s}}_* (B\Gamma)$ to the higher index class 
$\Ind (\wt \eth_{\sign})$ (for the twisting bundle $ r^* E\Gamma\times_\Gamma C^*_r \Gamma$),
is well defined. As in the closed case, see \cite{ros-weinberget-at-2}, it might be possible to refine this result 
and show that the index map actually defines a group homomorphism $\Omega^{{\rm Witt,s}}_* (B\Gamma)\to K_*(C^*_r \Gamma)$

Recall that Siegel's Witt-bordism groups $\Omega^{{\rm Witt}}_* (B\Gamma)$ are given in terms of equivalence classes
of pairs $(\hat X,u: \hat X\to B\Gamma)$, 
with $\hat X$ a Witt space  which is not necessarily {\it smoothly} stratified.

We also 
recall that, working with PL spaces, Sullivan \cite{Sullivan} has defined 
the notion of connected KO-Homology $ko_*$ (see also \cite[page 1069]{Se}).
Siegel \cite[Chapter 4]{Se} has shown that the natural map 
$\Omega^{{\rm SO}}_*(B \Gamma) \otimes_\bbZ \bbQ \rightarrow  
\Omega^{{\rm Witt}}_* (B\Gamma) 
\otimes_\bbZ \bbQ $ is surjective by showing that the natural map $\Omega^{{\rm SO}}_* (B \Gamma) \otimes_\bbZ \bbQ \rightarrow ko_* (B \Gamma) \otimes_\bbZ \bbQ$ is surjective and the  Siegel's natural map (\cite{Se})  $\Omega^{{\rm Witt}}_* (B\Gamma) \otimes_\bbZ \bbQ  
\rightarrow ko_* (B \Gamma) \otimes_\bbZ \bbQ$ is an isomorphism.
We need to extend these results  for the corresponding groups associated with the category of smoothly stratified spaces. 
\begin{proposition} \label{prop:ko}
The natural map $\Omega^{{\rm SO}}_* (B \Gamma) \otimes_\bbZ \bbQ \rightarrow \Omega^{{\rm Witt,s}}_* (B\Gamma) 
\otimes_\bbZ \bbQ$ is surjective.
\end{proposition}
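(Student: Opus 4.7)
The natural map factors through Siegel's classical map via the forgetful morphism
\begin{equation*}
\psi\colon \Omega^{\rm Witt,s}_*(B\Gamma)\otimes\bbQ \longrightarrow \Omega^{\rm Witt}_*(B\Gamma)\otimes\bbQ
\end{equation*}
induced by canonically triangulating a smoothly stratified Witt space, so that $\psi$ composed with the map in the proposition equals Siegel's map $\Omega^{\rm SO}_*(B\Gamma)\otimes\bbQ \to \Omega^{\rm Witt}_*(B\Gamma)\otimes\bbQ$. Since the latter is known to be surjective by Siegel's theorem recalled above, to prove the proposition it suffices to show that $\psi$ is injective.

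My plan is to prove injectivity of $\psi$ by replaying Siegel's null-bordism construction inside the smoothly stratified category. Siegel's argument in the PL setting proceeds by a depth induction on the stratification: at each odd-codimension singular stratum with link $L$ satisfying the Witt condition, one removes an open tubular neighborhood and caps off along each $L$ with a PL manifold (or, for deeper strata, a lower-depth Witt space) bounding $L$, whose existence is ensured by vanishing of middle-dimensional intersection cohomology of $L$ together with surjectivity of the oriented signature. I would perform the analogous construction using the Thom--Mather control data of a smoothly stratified space: tubular neighborhoods of singular strata are smooth fiber bundles whose fibers are open cones on smoothly stratified links, and the capping-off can be done preserving smooth stratification provided one matches compatible smooth collar data on the two pieces. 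Working rationally allows one to absorb any finite-order obstructions by passing to a large integer multiple.

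The main obstacle is verifying that this gluing can be carried out while preserving compatible Thom--Mather control data across the glued boundary at every level of the stratification; this amounts to a stratified version of the Cairns--Whitehead smoothing theorem, handled inductively in depth with the base case being the classical smoothing theorem for manifolds. An alternative and possibly cleaner route is to adapt Siegel's proof of the isomorphism $\Omega^{\rm Witt}_*(B\Gamma)\otimes\bbQ \cong ko_*(B\Gamma)\otimes\bbQ$ wholesale to the smoothly stratified category, showing that Sullivan's $KO$-orientation is naturally defined on smoothly stratified Witt spaces and induces a rational isomorphism $\Omega^{\rm Witt,s}_*(B\Gamma)\otimes\bbQ \cong ko_*(B\Gamma)\otimes\bbQ$; combined with the classical surjectivity of $\Omega^{\rm SO}_*(B\Gamma)\otimes\bbQ \to ko_*(B\Gamma)\otimes\bbQ$, the proposition then follows at once.
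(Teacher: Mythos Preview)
Your ``alternative and possibly cleaner route'' is exactly the paper's proof: one checks that Siegel's Theorem~4.4 (an irreducible even-dimensional Witt space with vanishing signature is Witt null-bordant) holds by inspection in the smoothly stratified category, whence Siegel's Chapter~4 arguments give the isomorphism $\Omega^{\rm Witt,s}_*(B\Gamma)\otimes\bbQ \cong ko_*(B\Gamma)\otimes\bbQ$, and the proposition follows from the classical surjectivity of $\Omega^{\rm SO}_*(B\Gamma)\otimes\bbQ \to ko_*(B\Gamma)\otimes\bbQ$.

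Your primary approach via the forgetful map $\psi$ is logically correct at the outer level (injectivity of $\psi$ plus surjectivity of the composite does force surjectivity of the first map), but it is a detour. The plan you sketch for proving injectivity of $\psi$---replaying Siegel's null-bordism construction inside the smooth category---is already the entire content of transferring Theorem~4.4, and once you have that you obtain the $ko$-isomorphism directly without ever invoking $\psi$ or the PL group. If instead you mean to smooth a \emph{given} PL Witt null-bordism, that is the stratified Cairns--Whitehead statement you flag as the main obstacle, which is genuinely harder and not needed here. In short, the alternative you identify at the end is both the paper's route and the one to take; the $\psi$-argument adds nothing.
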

\begin{proof} 
Theorem 4.4 of \cite{Se} is still valid (by inspection) if one works in the category 
of smoothly stratified oriented Witt spaces. Namely, if $\hat X$ is an irreducible smoothly stratified Witt space 
of even dimension $>0$ such that its signature $w(\hat X)=0$, then $\hat X$ is Witt cobordant to 
zero in the category of smoothly stratified Witt spaces. The arguments of \cite[Chapter 4]{Se} 
show that the Siegel's natural map:
$$
\Omega^{{\rm Witt,s}}_* (B\Gamma) \otimes_\bbZ \bbQ  
\rightarrow ko_* (B \Gamma) \otimes_\bbZ \bbQ
$$ is an isomorphism and, using the surjectivity of the natural map
$$\Omega^{{\rm SO}}_* (B \Gamma) \otimes_\bbZ \bbQ \rightarrow ko_* (B\Gamma) 
\otimes_\bbZ \bbQ,$$
 one gets the Proposition.
\end{proof}

\section{The homology L-class of a Witt space. Higher signatures.} \label{sec:Lclass}

The homology $L$-class $L_*(\hat X)\in H_* (\hat X,\bbQ)$ of a Witt space
$\hat X$ was defined independently by Goresky and MacPherson \cite{GM},
following ideas of Thom \cite{Thom}, and by Cheeger \cite{Ch}. See also Siegel \cite{Se}.
In this paper we shall adopt the approach of Goresky and
MacPherson.
%
We briefly recall the definition: if $\hat X$ has dimension
$n$, $k\in \bbN$ is such that $2k-1>n$, and $\cN$ denotes the `north pole' of $\bbS^k$, one can show that the map
$\sigma: \pi^k (\hat X)\to \bbZ$ that associates to $[f:\hat X\to S^k]$ the Witt-signature 
of $f^{-1} (\cN)$ is well defined and a group homomorphism. Now, by Serre's theorem,
the Hurewicz map $\pi^k (\hat X)\otimes \bbQ \to H^k (\hat X,\bbQ)$ is an isomorphism 
for $2k-1>n$ and we can thus view the above homomorphism, $\sigma\otimes \Id_{\bbQ}$, as a linear functional in
$\mathrm{Hom} (H^k (\hat X),\bbQ)\simeq H_k (\hat X,\bbQ)$. This defines $L_k (\hat X)\in 
H_k (\hat X,\bbQ)$. The restriction $2k-1>n$ can be removed by crossing with a high dimensional
sphere in the following way. Choose a positive integer $l $ such that 
$2(k+\ell) -1 > n +\ell$ and $k + \ell > n.$
Then by the above construction, $L_{k + \ell} (\hat X\times S^\ell)$ is well defined in $H_{k +\ell} (\hat X\times S^\ell , \bbQ).$ 
Since  $k + \ell > n,$ the Kunneth Theorem shows that there is a natural isomorphism $ I : H_{k +\ell} (\hat X\times S^\ell , \bbQ) 
\rightarrow H_{k } (\hat X , \bbQ).$ One then defines:
$L_k (\hat X):= I ( L_{k+l} (\hat X\times S^\ell) )$.

Once we have a homology $L$-class we can define the higher signatures as follows. 

\begin{definition}\label{def:higher-signatures}
Let $\hat X$ be a Witt space and $\Gamma:=\pi_1 (M)$.
Let $r: \hat X\to B\Gamma$ be a classifying map for the universal cover.
The (Witt-Novikov) higher signatures of $\hat X$ are the collection of rational numbers:
\begin{equation}\label{higher-signatures}
\{<\alpha,r_* L_*( \hat X)>\,,\alpha\in H^* (B\Gamma,\bbQ)\}
\end{equation}
We set $\sigma_\alpha (\hat X):= <\alpha,r_* L_*( \hat X)>$.
\end{definition}

If $X$ is an oriented  closed compact manifold and $r:X\to B\pi_1 (X)$ is the classifying map, it is not difficult
to show that 
$$ <\alpha,r_* L_*(X)>=<L(X)\cup r^* \alpha,[X]>\equiv \int L(X)\cup r^* \alpha\,.$$
Thus our definition is consistent with the usual definition of Novikov higher signatures in the closed case.

The Novikov conjecture in the closed case  is the statement
that all the higher signatures $\{<L(X)\cup r^* \alpha,[X]>\,,\,\alpha\in H^* (B\Gamma,\bbQ)\}$ are homotopy invariants.

The Novikov conjecture in the Witt case 
 is the statement that the Witt-Novikov
 higher signatures  $ \{<\alpha,r_* L_*(\hat X)>\,,\,\alpha\in H^* (B\Gamma,\bbQ)\}$
  are {\bf stratified} homotopy
invariants. Notice that intersection homology is not a homotopy invariant theory;
however, it is a stratified homotopy-invariant theory, see \cite{friedman}.

We shall need to relate the homology $L$-class of Goresky-MacPherson to the signature
class $[\eth_{\sign} ] \in K_*(\hat X)$.

\begin{theorem}\label{theo:cmwu} (Cheeger/Moscovici-Wu)
The topological homology L-class $L_*(\hat X)\in H_* (\hat X,\bbQ)$ is the
image, under the rationalized homology Chern character, of  
the signature K-homology class $[\eth_{\sign} ]_{\bbQ} \in K_*(\hat X)\otimes \bbQ$; in formul\ae
\begin{equation}\label{chern}
\ch_* [\eth_{\sign} ]_{\bbQ} = L_* (\hat X) \quad \text{in} \quad H_* (\hat X,\bbQ).
\end{equation}
\end{theorem}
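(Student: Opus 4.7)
The plan is to verify the identity by pairing both sides with arbitrary rational cohomology classes, exploiting the very definition of the Goresky--MacPherson homology $L$-class via transverse preimages of maps to spheres. Two classes in $H_*(\hat X,\bbQ)$ agree iff they pair equally with every $\alpha\in H^*(\hat X,\bbQ)$; moreover, since $\ch\colon K^0(\hat X)\otimes\bbQ\to H^*(\hat X,\bbQ)$ is rationally surjective and intertwines cap products, one has
$$
\langle \ch(\xi),\,\ch_*[\eth_{\sign}]_\bbQ\rangle \;=\; \ind\bigl(\eth_{\sign}\otimes\xi\bigr), \qquad \xi\in K^0(\hat X),
$$
so it suffices to verify $\langle\ch(\xi),L_*(\hat X)\rangle = \ind(\eth_{\sign}\otimes\xi)$ for every $\xi$.

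By Serre's theorem---as already used in the construction of $L_*$---any rational cohomology class on $\hat X\times S^\ell$ for $\ell$ large is a rational multiple of $f^*\mu_k$ for some continuous map $f\colon\hat X\times S^\ell\to S^k$ with $2k-1>n+\ell$, where $\mu_k$ generates $H^k(S^k,\bbQ)$. Realizing $\mu_k$ as a rational multiple of $\ch(\beta_k)$ for the Bott class $\beta_k\in\wt K^*(S^k)$ reduces matters to $\xi=f^*\beta_k$; the extra factor $S^\ell$ is harmless since both sides are multiplicative under product with a closed oriented manifold. The left-hand side is then, by the very definition of the $L$-class, a universal rational multiple of $w\bigl(f^{-1}(\cN)\bigr)$, where $\cN$ is the north pole and $f$ has been perturbed to be smooth on the regular part and transverse to $\cN$; by Part~I this Witt signature equals the index of the intrinsic signature operator on the smoothly stratified Witt subspace $f^{-1}(\cN)$. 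The remaining task is therefore the Bott-periodicity identity
$$
\ind\bigl(\eth_{\sign}\otimes f^*\beta_k\bigr) \;=\; \ind\bigl(\eth_{\sign}(f^{-1}(\cN))\bigr),
$$
up to the same universal normalization.

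For this identity, transversality gives a tubular neighborhood $\cU\cong f^{-1}(\cN)\times\bbR^k$ inside $\reg{\hat X}$ containing the support of $f^*\beta_k$; excision combined with multiplicativity of the signature operator on Euclidean factors and Bott periodicity for the flat signature operator on $\bbR^k$ twisted by $\beta_k$ yields the claim. In the smooth setting this is the classical Moscovici--Wu computation; here the substantive content is carrying it out on a Witt space. The main obstacle is therefore analytic: one must choose $f$ smooth on $\reg{\hat X}$ and transverse both to $\cN$ and to the full stratification, so that $f^{-1}(\cN)$ inherits a genuine smoothly stratified Witt structure; verify that on $\cU$ the iterated conic metric on $\hat X$ is comparable, modulo lower-order terms in the iterated edge calculus of Part~I, to the product of an iterated conic metric on $f^{-1}(\cN)$ with the flat metric on $\bbR^k$; and show that the twisted signature operator decomposes, modulo errors controlled in that calculus, as the tensor product of $\eth_{\sign}(f^{-1}(\cN))$ with the flat signature operator on $\bbR^k$. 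The metric-independence of the K-homology signature class (Theorem~\ref{theo:k-homology}) then reduces to such a product metric, after which the smooth Moscovici--Wu argument concludes.
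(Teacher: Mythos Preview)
The paper does not actually prove this theorem: it is stated with attribution to Cheeger and to Moscovici--Wu, and the paragraph following the statement simply records that the iterated conic metrics under consideration fall within the quasi-isometry class treated in \cite{Mosc-Wu-Witt}, and that Moscovici--Wu's argument proceeds via the \emph{straight Chern character} with values in Alexander--Spanier homology, identified with singular homology. So there is no in-paper proof to compare against; your sketch is an attempt to reconstruct the cited argument.

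Your outline is broadly the Moscovici--Wu strategy --- reduce to pairings with classes pulled back from spheres, invoke the Thom--Goresky--MacPherson definition of $L_*$ via signatures of transverse preimages, and match this with a twisted index via a localization/Bott argument --- but there is one concrete slip and one genuine omission. The slip: you write that transversality gives a tubular neighborhood $\cU\cong f^{-1}(\cN)\times\bbR^k$ \emph{inside $\reg{\hat X}$}. This is false in general; $f^{-1}(\cN)$ is itself a stratified Witt subspace meeting the singular strata of $\hat X$, and its tubular neighborhood is a stratified object in $\hat X$, not a smooth open set in the regular part. The product decomposition and the subsequent excision/multiplicativity step must therefore be carried out in the stratified (iterated conic) category, which is exactly where the analytic work lies. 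The omission: Moscovici--Wu do not run the pairing argument through the ordinary Chern character as you do, but through their purpose-built straight Chern character (Alexander--Spanier cycles from the heat kernel), precisely because the localization near $f^{-1}(\cN)$ is delicate on a singular space; your last paragraph gestures at the required analysis but does not supply it, and the phrase ``the smooth Moscovici--Wu argument concludes'' is misleading since their argument is already for Witt spaces, not a smooth template one can transplant.
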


This result is due to Cheeger, who proved it for
piecewise flat metric of conic type, and to Moscovici-Wu, who gave an alternative argument
valid also for any metric quasi-isometric to such a metric \cite{Ch}, \cite{Mosc-Wu-Witt}.
It is worth pointing out here that our metrics do belong to the class considered in \cite{Mosc-Wu-Witt}.
Notice that Moscovici-Wu prove that the straight Chern
character of $[\eth_{\sign} ]_{\bbQ} \in K_*(\hat X)\otimes \bbQ$ is equal to $L_*(\hat X)\in H_* (\hat X,\bbQ)$;
the straight Chern character has values in Alexander-Spanier homology; the equality with 
$L_*(\hat X)\in H_* (\hat X,\bbQ)$ is obtained using the isomorphism between Alexander-Spanier and singular homology \cite{Mosc-Wu-Witt}.

\section{Stratified homotopy invariance of the index class: the analytic approach}\label{sect:shi}

One key point in all the index theoretic proofs of the Novikov conjecture for closed oriented
manifolds is the one stating the homotopy invariance of the signature index class in $K_* (C^*_r \Gamma)$.
By this we mean that if $r:X\to B\Gamma$ as above, $f: X^\prime\to X$ is a smooth homotopy equivalence and 
$r^\prime:= r\circ f:X^\prime\to B\Gamma$,
 then the index class, in $K_* (C^*_r\Gamma)$, associated to  $\widetilde{\eth}_{\sign}$ (i.e., associated to
 the signature operator on $X$, $\eth_{\sign}$,
twisted by $r^* E\Gamma\times_\Gamma C^*_r\Gamma$) is equal to the one associated to $\widetilde{\eth}_{\sign}^\prime$
(i.e., associated to
 the signature operator on $X^\prime$, $\eth_{\sign}^\prime$,
twisted by $(r^\prime)^* E\Gamma\times_\Gamma C^*_r\Gamma$).
There are two approaches to this fundamental result:
\begin{enumerate}
\item one proves {\it analytically} that $\Ind (\widetilde{\eth}_{\sign} )=\Ind (\widetilde{\eth}_{\sign}^\prime)$ in $K_* (C^*_r \Gamma)$;
\item one proves that the index class is equal to an a priori homotopy invariant, the Mishchenko ($C^*$-algebraic) symmetric signature.
\end{enumerate}

In this section we pursue the first of these approaches. We shall thus
establish the stratified homotopy invariance of the index class on Witt spaces by following ideas from 
  Hilsum-Skandalis \cite{H-S}, where this property is proved for closed compact manifolds. See also \cite{PS}.

\subsection{Hilsum-Skandalis replacement of $f$}

If $X$ and $Y$ are closed Riemannian manifolds, and $f:X \to Y$ is a homotopy equivalence, it need not be the case that pull-back by $f$ induces a bounded operator in $L^2$. Indeed, suppose $f$ is an embedding and $\phi_\eps$ is a function which equals $1$ on the $\eps$ tubular neighborhood of the image of X.
The $L^2$-norm of $\phi_{\eps}$ is bounded by $C \eps^{\mathrm{codim}_YX}$ and hence tends to zero, while $f^* \phi_{\eps} \equiv 1$ on $X$ and so its $L^2$ norm is constant.
Thus the closure of the graph of $f^*$, say over piecewise constant functions, contains an element of the form $(0,1)$, and is not itself the graph of an operator. 

On the other hand, if $f$ is a submersion, and the metric on $X$ is a submersion metric, then $f^*$ clearly does induce a bounded operator on $L^2$. Since the latter property is a quasi-isometry invariant, and any two metrics on $X$ are quasi-isometric, it follows that pull-back by a submersion always induces a bounded operator in $L^2$.

As one is often presented with a homotopy equivalence $f$ and interested in properties of $L^2$ spaces, it is useful to follow Hilsum and Skandalis \cite{H-S} and replace pull-back by $f$ by an operator that is bounded in $L^2$. We refer to this operator as the HS replacement of $f^*$ and denote it $HS(f)$.

Such a map is constructed as follows. Consider a disk bundle $\pi_Y:\bbD_Y \to Y$ and the associated pulled back bundle  $f^* \bbD_Y$ 
by the map $f: X \rightarrow Y.$ Denote by
 $\pi_X: f^*\bbD_Y \to X$ the induced projection. Then   $f$ admits a natural lift $\bbD(f)$ such that
\begin{equation*}
	\xymatrix{ f^*\bbD_Y \ar[r]^{\bbD(f)} \ar[d] & \bbD_Y \ar[d] \\ X \ar[r]^{f} & Y }
\end{equation*}
commutes. Moreover, we consider  a (smooth) map $e:\bbD_Y \to Y$ such that $p=e \circ \bbD(f): f^*  \bbD_Y \to Y$ is a submersion, and a choice of Thom form $\cT$ for $\pi_X$.
The Hilsum-Skandalis replacement of $f^*$ is then the map
\begin{equation*}
	\xymatrix @R=1pt @C=1pt
	{ HS(f) = HS_{\cT, f^*\bbD_Y, \bbD_Y, e}(f):& \CI(Y; \Lambda^*) \ar[r] & \CI(X; \Lambda^*) \\
	& u \ar@{|->}[r] & (\pi_X)_* (\cT \wedge p^*u) }
\end{equation*}
Notice that $HS(f)$ induces a bounded map in $L^2$ because $(e\circ \bbD(f) )^*$ does.

For example, as in \cite{H-S}, one can start with an embedding $j: Y \to \bbR^N$ and a tubular neighborhood $\cU$ of $j(Y)$ such that $j(\zeta) + \bbD \subseteq \cU$,
and then take $\bbD_X = X \times \bbD$, $\bbD_Y = Y \times \bbD$, $\bbD(f) = f \times \id$, and $e(\zeta, v) = \tau(\zeta + v)$ where $\tau: \cU \to Y$ is the projection.
Alternately, one can take $\bbD_Y$ to be the unit ball subbundle of $TY$  and $e(\zeta, v) = \exp_{f(\zeta)}(v)$. 
We will extend the latter approach to stratified manifolds.

In any case, one can show that $HS(f)$ is a suitable replacement for $f^*$. 
Significantly, using $HS(f)$ we will see that the K-theory classes induced by the signature operators of homotopic stratified manifolds coincide.
\subsection{Stratified homotopy equivalences}

Let $\hat X$ and $\hat Y$ denote stratified spaces, $X$ and $Y$ their regular parts, and $\cS(X)$ and $\cS(Y)$ the corresponding sets of strata.
Following \cite{friedman} and \cite[Def. 4.8.1 ff]{Kirwan-Woolf} we say that a map $f:\hat X \to \hat Y$ is 
{\em stratum preserving} if
\begin{equation*}
	S \in \cS(\hat Y) \implies f^{-1}(S) \text{ is a union of strata of $X$}
\end{equation*}
and {\em codimension preserving} if also
\begin{equation*}
	\mathrm{codim} f^{-1}(S) = \mathrm{codim} S.
\end{equation*}
We will say that a map is {\em strongly stratum preserving} if it is both stratum and codimension preserving.

In these references, a {\em stratum-preserving homotopy equivalence} between stratified spaces is a strongly stratum preserving map $f:\hat X \to \hat Y$ such that there exists another strongly stratum preserving map $g: \hat Y \to \hat X$ with both $f \circ g$ and $g \circ f$ homotopic to the appropriate identity maps through strongly stratum preserving maps. It is shown that stratum-preserving homotopy equivalences induce isomorphisms in intersection cohomology.

Notice that the existence of a homotopy equivalence between {\em closed} manifolds implies that the manifolds have the same dimension, so it is natural to impose a condition like strong stratum preserving on stratified homotopy equivalences.

A smooth strongly stratified map lifts, as in \cite[Prop. 3.2]{BHS}, to a smooth map between the resolutions of the stratified spaces $\wt f: \wt X \to \wt Y$ and will map each boundary face of $\wt X$ to a boundary face of $\wt Y$.
Hence it is a b-map (since its differential preserves vector fields tangent to the boundary) and it necessarily sends fibers of a boundary fibration of $\wt X$ to fibers of a boundary fibration of $\wt Y$ (since it comes from a map of the bases of the fibrations).
The latter implies that the induced map on vector fields preserves the bundle of vertical vector fields at each boundary face and so $\wt f$ (and hence $f$) induces maps
\begin{equation*}
	f^*: \CI(Y; \Ie \Lambda^*) \to \CI(X; \Ie \Lambda^*), \Mand
	f^*: \CI(Y; \Iie \Lambda^*) \to \CI(X; \Iie \Lambda^*),
\end{equation*}
though as on a closed manifold these do not necessarily induce bounded maps in $L^2$.

\subsection{Hilsum-Skandalis replacement on complete edge manifolds}

Suppose ${\wt X}$ and ${\wt Y}$ are both manifolds with boundary and boundary fibrations
\begin{equation*}
	\phi_{\wt X}: \pa {\wt X} \to H_{\wt X}, \quad
	\phi_{\wt Y}: \pa {\wt Y} \to H_{\wt Y}.
\end{equation*}

Endow ${\wt Y}$ with a complete edge metric.
Let $\bbD_{\wt Y} \subseteq {}^eT{\wt Y}$ be the edge vector fields on ${\wt Y}$ with pointwise length bounded by $1$, and let $\exp: \bbD_{\wt Y} \to {\wt Y}$ be the exponential map on ${\wt Y}$ with respect to the edge metric.
The space $\bbD_{\wt Y}$ is itself an (open) edge manifold with boundary fibration $\phi_{\bbD_{\wt Y}}: \pa \bbD_{\wt Y} \to \pa {\wt Y} \to H_{\wt Y}$.
Notice that $\exp$ is a b-map that sends fibers of $\phi_{\bbD_{\wt Y}}$ to fibers of $\phi_{\wt Y}$ 	
and hence induces a map
\begin{equation*}
	\exp_*:{}^eT\bbD_{\wt Y} \to {}^eT{\wt Y}
\end{equation*}
which is seen to be surjective.

Let $f: {\wt X} \to {\wt Y}$ be a smooth b-map that sends fibers of $\phi_{\wt X}$ to fibers of $\phi_{\wt Y}$.
Pulling-back the bundle $\bbD_{\wt Y} \to \wt Y$ to $\wt X$ gives a commutative diagram
\begin{equation}\label{PullBackDiag}
	\xymatrix{
	f^*\bbD_{\wt Y} \ar[r]^{\bar f} \ar[d]^{\pi_{\wt X}} & \bbD_{\wt Y} \ar[d]^{\pi_{\wt Y}} \\ \wt X \ar[r]^f & \wt Y }
\end{equation}
and we can use to construct the Hilsum-Skandalis replacement for pull-back by $f$.
Namely, define  $e = \exp: \bbD_{\wt Y} \to {\wt Y}$, let $\cT$ the pull-back by $\bar f$ of a Thom form for $ \bbD_{\wt Y}$, and let 
\begin{equation}\label{def-of-HS}
	HS(f) = (\pi_{\wt X})_* ( \cT \wedge p^* ): \CI( {\wt Y}; \Ie \Lambda^*) \to \CI({\wt X}; \Ie\Lambda^*)
\end{equation}
with $p = e \circ \bbD(f)$. Observe, that $p$ is a proper submersion and hence a fibration.
Then, as above, $HS(f)$ induces a map between the corresponding $L^2$ spaces.

The generalization to manifolds with corners and iterated fibrations structures is straightforward: we just replace the edge tangent bundle with the iterated edge tangent bundle. 
Indeed, it is immediate that if $\bbD_{\wt Y} \subseteq \Ie T\wt Y$ is the set of iterated edge vector fields with pointwise length bounded by one the exponential map $\exp: \bbD_{\wt Y} \to \wt Y$ with respect to a (complete) iterated edge metric induces a map $\exp_*: \Ie T\bbD_{\wt Y} \to \Ie T\wt Y$. That this map is surjective can be checked locally and follows by a simple induction.
Then given a smooth b-map $f: \wt X \to \wt Y$ with the property that, whenever $H \in \cM_1(\wt X)$ is sent to $K \in \cM_1(\wt Y)$, the fibers of the fibration on $H$ are sent to the fibers of the fibration on $K$,
we end up with a map
\begin{equation*}
	HS(f): \CI(\wt Y, \Ie \Lambda^* ) \to \CI(\wt X, \Ie \Lambda^*)
\end{equation*}
that induces a bounded map between the corresponding $L^2_{\ie}$ spaces.


Next, recall that 
\begin{equation*}
	\CI(\wt Y; \Iie \Lambda^1) = \rho_{\wt Y} \CI(\wt Y; \Ie \Lambda^1) 
\end{equation*}
where $\rho_{\wt Y}$ is a total boundary defining function for $\partial \wt Y$.
Hence, if $f: \wt X \to \wt Y$ induces $f^*: \CI(\wt Y; \Ie \Lambda^1) \to \CI(\wt X; \Ie \Lambda^1)$, it will also induce a map
\begin{equation*}
	f^*: \CI(\wt Y; \Iie \Lambda^1) \to \CI(\wt X; \Iie \Lambda^1)
\end{equation*}
if $f^*(\rho_{\wt Y})$ is divisible by $\rho_{\wt X}$. That is, we want $f$ to map the boundary of $\wt X$ to the boundary of $\wt Y$ (a priori, it could map a boundary face of $\wt X$ onto all of $\wt Y$).
For maps $f$ coming from pre-stratified maps, this condition holds and hence the map $HS(f)$ induces a bounded map between iterated incomplete edge $L^2$ spaces.
Of course, once $f^*$ induces a map on $\Iie \Lambda^1$, it extends to a map on $\Iie \Lambda^*$.

\subsection{Stratified homotopy invariance of the analytic signature class}

Suppose we have a stratum-preserving smooth homotopy equivalance between stratified spaces $f: \hat X \to \hat Y$.
Recall that $X$ and $Y$ denote the regular parts of $\hat X$ and $\hat Y$, respectively. Recall the map $r: \hat Y \to B\Gamma$ 
and the 
 flat bundle $\cV '$ of finitely generated $C^*_r\Gamma$-modules over $\hat Y:$ 
 $$
 \cV '\,=\, C^*_r \Gamma \times_\Gamma r^*(E \Gamma).
 $$ Notice that using the blowdown map $\wt Y \rightarrow \hat Y,$  $\cV '$ induces a flat bundle, still denoted $\cV '$ on $\wt Y.$
Consider  $\cV = f^*\cV'$ the corresponding flat bundle over $\hat X$.  We have a flat connection on $\cV'$, $\nabla_{\cV'}$, over $Y$ (and $\wt Y$) and associated differential $d_{\cV'}$, and
 corresponding connection $\nabla_{\cV}$ and differential $d_{\cV}$ on $X$ (and $\wt X$).
It is straightforward to see that the Hilsum-Skandalis replacement of $f$ constructed above extends to 
\begin{equation*}
	HS(f): \CI( Y; \Iie \Lambda^* \otimes \cV') \to \CI( X; \Iie \Lambda^* \otimes \cV)
\end{equation*}
and induces a bounded operator between the corresponding $L^2$ spaces.

We now explain how the rest of the argument of Hilsum-Skandalis extends to this context.

Suppose $(f_t)_{0\leq t \leq 1}: \hat X \to \hat Y$ is a homotopy of stratum-preserving smooth homotopy equivalences, let $\bbD_{\wt Y}$ be as above.
Assume that $(e_s)_{0\leq s \leq 1}: \bbD_{\wt Y} \to \wt Y$ is a homotopy of smooth maps such that, for any $s \in [0,1]$,  $p_s = e_s \circ \bbD(f_s): f_s^*\bbD_{\wt Y} \to \wt Y$ induces a surjective map on $\iie$ vector fields.
Choose a smooth family of bundle isomorphisms (over $ \wt X$) $A_s: f_s^* \bbD_{\wt Y} \rightarrow f_0^* \bbD_{\wt Y}$ 
$( 0 \leq s \leq 1)$
such that $A_0= Id.$ Set $\cT_s = A_s^* \cT_0$ where $\cT_0$ is a Thom form for
the bundle $f_0^* \bbD_{\wt Y} \rightarrow \wt X.$
 Consider $\nabla$ be a flat unitary connection on $\cV '$. It induces an exterior derivative $d_{\cV '}$ on the bundle $\wedge^* T^* \wt Y \otimes \cV '.$
Choose a smooth family of $C^*_r \Gamma-$bundle isomorphism $U_s$ from 
the bundle $(p_s \circ A^{-1}_s)^* \cV ' \rightarrow f_0^* \bbD_{\wt Y}$ onto the bundle 
$p_0^*\cV ' \rightarrow f_0^* \bbD_{\wt Y}$ such that $U_0 =Id.$ Implicit in the statement of the next Lemma is the fact that,  for each $s \in [0, 1]$, $p_s \circ A^{-1}_s$ induces a morphism 
 from the space of sections of the bundle $ \cV ' \rightarrow \wt Y$ on
the space of sections of the bundle $(p_s \circ A^{-1}_s)^* \cV ' \rightarrow f_0^* \bbD_{ \wt Y}.$

\begin{lemma}\label{lem:Htpy}
Under the above hypotheses and notation,
there exists a bounded operator $\Upsilon: L^2_{\iie}(\wt Y; \Iie \Lambda^* \otimes \cV ') \to L^2_{\iie}(f_0^*\bbD_{\wt Y}; \Iie \Lambda^* \otimes p_0^*\cV ')$ such that 
\begin{equation*}
(  {\rm Id} \otimes U_1) \circ  (\,  \cT_0 \wedge (p_1 \circ A_1^{-1})^*\,) - (\cT_0 \wedge p_0^*) = p_0^*( d_{\cV '}) \Upsilon + \Upsilon d_{\cV '}.
\end{equation*}
\end{lemma}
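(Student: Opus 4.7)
The strategy is to construct $\Upsilon$ as the time-average of a Cartan-type chain homotopy interpolating the two operators on the left-hand side. Setting $Z := f_0^*\bbD_{\wt Y}$, define for $s\in[0,1]$
\begin{equation*}
T_s u := \cT_0 \wedge (\mathrm{Id}\otimes U_s)\,(p_s\circ A_s^{-1})^* u,
\end{equation*}
so that $T_0 = \cT_0\wedge p_0^*$ (since $U_0=\mathrm{Id}$) and $T_1$ is the first operator in the lemma. The problem reduces to writing $\int_0^1\partial_s T_s\,ds$ as a chain commutator.

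The key step is to package the family into a single geometric object on $[0,1]\times Z$: take the total map $P:[0,1]\times Z \to \wt Y$, $P(s,z) := p_s(A_s^{-1}(z))$, and assemble the $U_s$ into a bundle isomorphism $\wt U:P^*\cV'\to \mathrm{pr}_Z^*\,p_0^*\cV'$ restricting to $U_s$ on each slice $\{s\}\times Z$. Decomposing
\begin{equation*}
\wt U(P^* u) = ds\wedge\alpha(s)+\beta(s)
\end{equation*}
with $\alpha,\beta$ containing no $ds$ gives $\beta(s) = (\mathrm{Id}\otimes U_s)(p_s\circ A_s^{-1})^* u$ (hence $T_s u = \cT_0\wedge\beta(s)$), and one sets
\begin{equation*}
\Upsilon u := \int_0^1 \cT_0\wedge\alpha(s)\,ds.
\end{equation*}
Using flatness of $\cV'$ together with $U_0 = \mathrm{Id}$, one may assume (after a zeroth-order gauge modification, intrinsically given by parallel transport along the homotopy $s\mapsto P_s$) that $\wt U$ intertwines $P^*(d_{\cV'})$ with the extended flat differential on $\mathrm{pr}_Z^*\,p_0^*\cV'$. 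Under this compatibility, Cartan's magic formula $[d,\iota_{\partial_s}] = \partial_s$ applied to the $\mathrm{pr}_Z^*p_0^*\cV'$-valued form $\wt U(P^* u)$ on $[0,1]\times Z$ yields
\begin{equation*}
\partial_s\beta(s) = d_{p_0^*\cV'}\,\alpha(s) + (\mathrm{Id}\otimes U_s)\,\iota_{\partial_s}(P^* d_{\cV'}u)\big|_{\{s\}\times Z}.
\end{equation*}
Since $\cT_0$ is a closed Thom form on $Z$ pulled back from $\wt X$, wedging with $\cT_0$ commutes with $d_{p_0^*\cV'}$ up to the sign given by its even degree; wedging the previous display with $\cT_0$ and integrating over $s\in[0,1]$ then produces exactly the two asserted terms $p_0^*(d_{\cV'})(\Upsilon u)$ and $\Upsilon(d_{\cV'}u)$.

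The main obstacle is $L^2_{\iie}$-boundedness of $\Upsilon$. In contrast to the smooth closed case of Hilsum-Skandalis, where submersions automatically give bounded pullbacks, here one must verify that the family $(p_s\circ A_s^{-1})^*$, and hence the operators $u\mapsto \alpha(s)$, are uniformly bounded between iterated incomplete edge $L^2$-spaces. This is precisely guaranteed by the standing hypothesis that $p_s\circ A_s^{-1}$ induces a surjective map on $\iie$ vector fields, i.e., is a submersion compatible with the iterated edge structure, together with the smoothness and uniformity of the families $A_s$, $U_s$ and $\cT_0$. A secondary technical point is the gauge choice for $U_s$ outlined above, needed to ensure that Cartan's formula produces exactly the two asserted terms rather than an extra zeroth-order contribution reflecting the non-flatness of $\wt U$ in the $s$-direction.
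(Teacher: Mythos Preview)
Your proposal is correct and follows essentially the same approach as the paper: both define the total homotopy $P(s,z)=p_s(A_s^{-1}(z))$ on $[0,1]\times f_0^*\bbD_{\wt Y}$ and take $\Upsilon$ to be the integral over $s$ of the contraction $\iota_{\partial_s}$ applied to $\cT_0\wedge(\mathrm{Id}\otimes U_s)P^*u$, invoking Cartan's formula. The paper's proof is in fact much terser---it simply writes down the formula for $\Upsilon$ and refers to Hilsum--Skandalis---whereas you spell out the decomposition $\wt U(P^*u)=ds\wedge\alpha(s)+\beta(s)$, the boundedness argument via the $\iie$-submersion hypothesis, and the gauge normalization of $U_s$ needed for the twisted Cartan identity; these are genuine details the paper leaves implicit.
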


\begin{proof}
We follow  Hilsum-Skandalis. Consider the map
$$
H: f_0^* \bbD_{ \wt Y} \times [0,1] \rightarrow \wt Y
$$
$$
(x,s) \rightarrow H(x,s)= p_s \circ A_s^{-1}(x).
$$
 Then the required map $\Upsilon$ is defined by, $ \forall \omega \in  L^2_{\iie}(\wt Y; \Iie \Lambda^* \otimes \cV '),$
 $$
 \Upsilon (\omega) = \int_0^1 i_{\frac{\partial}{\partial t}} \bigl( \,
  U_t \circ (p_t \circ A^{-1}_t)_F^* \otimes ( \cT_0 \wedge H^* \omega) \, \bigr) d t.
 $$
\end{proof}

We need to see how this construction handles composition.
Recall that given $f:\wt X \to \wt Y$ we are taking $\bbD_{\wt Y}$ to be the $\ie$ vectors over $\wt Y$ with length bounded by one,  $\bbD(f): f^*\bbD_{\wt Y} \to \bbD_{\wt Y}$ the natural map \eqref{PullBackDiag}, $e: \bbD_{\wt Y} \to {\wt Y}$ the exponential map, $p= e\circ \bbD(f)$, and $\cT$ a Thom form on $f^* \bbD_{\wt Y}$, and then
\begin{equation*}
	HS(f)u = (\pi_X)_* (\cT \wedge p^*u).
\end{equation*}

Now suppose $\wt X$, $\wt Y$, and $\wt Z$ are manifolds with corners and iterated fibration structures, and 
\begin{equation*}
	\wt X \xrightarrow{h} \wt  Y \xrightarrow{f} \wt Z
\end{equation*}
are smooth b-maps that send boundary hypersurfaces to boundary hypersurfaces and the fibers of boundary fibrations to the fibers of boundary fibrations. Assume that the map  $r: \hat X \rightarrow B \Gamma$ above is of the form 
$r=r_1 \circ f$ for a suitable map $r_1: \hat Z \rightarrow B \Gamma.$ We then get a flat $C^*_r\Gamma-$bundle 
$\cV ''$ over $\hat Z$ (and $\wt Z$) such that $\cV ' = f^* \cV ''.$
Denoting the various $\pi_{\cdot}$'s by $\tau$'s, we have the following diagram
\begin{equation*}
	\xymatrix{ (f \circ p')^* \bbD_{\wt Z} \ar[d]^{\tau_1} \ar[rd]^{\wt p} \ar@/_2pc/[dd]_{\tau} \ar@/^2pc/[rrdd]^{p''} \\
	h^*\bbD_{\wt Y} \ar[d]^{\tau_2} \ar[rd]^{p'} & f^*\bbD_{\wt Z} \ar[d]^{\tau_0} \ar[rd]^p \\
	\wt X \ar[r]^h & \wt Y \ar[r]^f & \wt Z }
\end{equation*}
where $\wt p(\zeta, \xi, \eta) = (p'(\zeta, \xi), \eta)$, and, with $\cT$ standing for a Thom form, we define
\begin{equation*}
\begin{gathered}
	HS(f): \CI(\wt Z; \Iie \Lambda^1 \otimes \cV '') \to \CI(\wt Y; \Iie \Lambda^1 \otimes \cV '), \\
	HS(h): \CI(\wt Y; \Iie \Lambda^1 \otimes \cV ' ) \to \CI(\wt X; \Iie \Lambda^1 \otimes \cV )\\
	HS(f,h): \CI(\wt Z; \Iie \Lambda^1 \otimes \cV '') \to \CI(\wt X; \Iie \Lambda^1\otimes \cV), \\ 
	HS(f)(u) = (\tau_0)_* (\cT_{\tau_0} \wedge p^*u), \quad HS(h)(u) = (\tau_2)_* (\cT_{\tau_2} \wedge (p')^*u) \\
	HS(f,h)(u) = \tau_* (\cT_{\tau_2} \wedge (\wt p)^* \cT_{\tau_0} \wedge (p'')^*u)
\end{gathered}
\end{equation*}

\begin{lemma}\label{lem:Comp}
 $HS(f,h) = HS(h) \circ HS(f)$ and $HS(f, h) - HS( f \circ h) = d_{\cV}\Upsilon + \Upsilon d_{\cV ''}$ for some bounded operator $\Upsilon$.
\end{lemma}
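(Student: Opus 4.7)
The lemma has two parts; I would attack them separately.

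First, $HS(f,h) = HS(h)\circ HS(f)$ is a formal identity that I would verify by unwinding the definitions along the commutative diagram preceding the statement. The key ingredients are: (i) the base-change identity $(p')^*(\tau_0)_* = (\tau_1)_*(\wt p)^*$ coming from the Cartesian square with corners $f^*\bbD_{\wt Z}$, $(f\circ p')^*\bbD_{\wt Z}$, $\wt Y$, and $h^*\bbD_{\wt Y}$; (ii) the projection formula $(\tau_i)_*(\tau_i^*\alpha \wedge \beta) = \alpha \wedge (\tau_i)_*\beta$; and (iii) functoriality, $(\tau_2)_*(\tau_1)_* = \tau_*$ together with $p^*(\wt p)^* = (p'')^*$. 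Substituting $HS(f)u$ into $HS(h)$ and applying these rules in order pulls the pushforward along $\tau_1$ across the outer Thom form $\cT_{\tau_2}$ and identifies the result as $\tau_*(\cT_{\tau_2} \wedge \wt p^* \cT_{\tau_0} \wedge (p'')^* u) = HS(f,h)(u)$. One only needs to keep careful track of the sign conventions for fiber integration, as in \cite{H-S}.

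Second, for $HS(f,h) - HS(f \circ h) = d_\cV \Upsilon + \Upsilon d_{\cV''}$, the plan is to reduce to Lemma \ref{lem:Htpy}. The two operators are both Hilsum--Skandalis replacements for $f \circ h$, but built from different disk bundles over $\wt X$: the direct one $(f \circ h)^*\bbD_{\wt Z}$ versus the iterated $(f \circ p')^*\bbD_{\wt Z}$ sitting over $h^*\bbD_{\wt Y}$. To fit both into the framework of Lemma \ref{lem:Htpy}, I would realize them on a common enlarged disk bundle over $\wt X$, concretely the fiber product $\bbD^{\mathrm{big}} := h^*\bbD_{\wt Y} \times_{\wt X} (f \circ h)^*\bbD_{\wt Z}$, which admits canonical projections to both factors. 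On $\bbD^{\mathrm{big}}$ I would construct a one-parameter family of smooth b-maps $p_s: \bbD^{\mathrm{big}} \to \wt Z$, surjective on $\iie$ vector fields throughout, such that at $s = 0$ integration against the outer Thom form collapses $\bbD^{\mathrm{big}}$ onto $(f \circ h)^*\bbD_{\wt Z}$ and reproduces $HS(f \circ h)$, while at $s = 1$ it collapses onto $(f \circ p')^*\bbD_{\wt Z}$ and reproduces $HS(f,h)$. Accompanying this with bundle isomorphisms $A_s$ covering $p_s$ and with parallel-transport isomorphisms $U_s$ of the flat $C^*_r\Gamma$-bundle, I would then apply Lemma \ref{lem:Htpy} verbatim to produce the bounded chain-homotopy operator $\Upsilon$.

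The main obstacle is precisely the construction in the second step: I must verify that the interpolating family $p_s$ is uniformly a surjective submersion on iterated edge vector fields, so that at every $s$ the operator $(\cT \wedge p_s^*)$ is well defined, bounded in $L^2_{\iie}$, and preserves the $\Iie$ form bundle. This requires using that $f$ and $h$ are smooth b-maps compatible with the iterated fibration structures, so that the compositions entering $p_s$ inherit the same compatibility at every boundary hypersurface of $\bbD^{\mathrm{big}}$. The equivariance with respect to the flat $C^*_r\Gamma$-bundles $\cV, \cV', \cV''$ is a comparatively minor bookkeeping issue since their flat parallel transports provide the maps $U_s$ needed in Lemma \ref{lem:Htpy}.
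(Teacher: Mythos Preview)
Your first part is essentially the paper's argument: the base-change identity $(\tau_1)_*\wt p^* = (p')^*(\tau_0)_*$, the projection formula, and functoriality of pushforward and pullback are exactly what the paper uses.

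For the second part your strategy is correct in that it reduces to Lemma~\ref{lem:Htpy}, but you work harder than necessary. There is no need to introduce a new enlarged bundle $\bbD^{\mathrm{big}} = h^*\bbD_{\wt Y}\times_{\wt X}(f\circ h)^*\bbD_{\wt Z}$: the space $(f\circ p')^*\bbD_{\wt Z}$, viewed via $\tau = \tau_2\circ\tau_1$ as a disk bundle over $\wt X$ with Thom form $\cT_{\tau_2}\wedge \wt p^*\cT_{\tau_0}$, already serves this purpose. The paper simply writes down the explicit homotopy
\[
(t;\zeta,\xi,\eta)\ \longmapsto\ \exp_{\,f(\exp_{h(\zeta)}(t\xi))}(\eta),
\]
which at $t=1$ is $p''$ and at $t=0$ factors through $\hat p:(f\circ h)^*\bbD_{\wt Z}\to \wt Z$, and observes that this stays within submersions so that Lemma~\ref{lem:Htpy} applies directly. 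Your fiber-product construction would of course recover the same space up to isomorphism, and your emphasis on checking that the interpolating family remains surjective on $\iie$ vector fields is a legitimate point that the paper passes over quickly; but the explicit scaling of the intermediate $\bbD_{\wt Y}$-variable makes the homotopy concrete and avoids the bookkeeping of building and then collapsing an auxiliary bundle.
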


\begin{proof} For simplicity, we give the proof only in the case $\Gamma =\{1\}.$
Using the specific definitions of $\tau_1, \wt p, p', \tau_0$ one checks easily that $(\tau_1)_* \wt p^* = (p')^* (\tau_0)_*.$ 
Therefore, $(\wt p)^* \cT_{\tau_0}$ is indeed a Thom form associated with $\tau_1.$
Since $p''= p \circ \wt p,$ one gets:
$$
	HS(f,h)
	= (\tau_2)_* (\tau_1)_* (\cT_{\tau_2}  \wedge \wt p^*( \cT_{\tau_0} \wedge p^*) ) 
	$$ Then replacing  $(\tau_1)_* \wt p^*$ by $(p')^* (\tau_0)_*$  one gets:
	
	$$HS(f,g)= (\tau_2)_* (\cT_{\tau_2}  \wedge (p')^* ( (\tau_0)_*  (\cT_{\tau_0} \wedge (p)^*) ) )
	= HS(h) \circ HS(f).
	$$

Next, notice that the maps
\begin{equation*}
	(t; \zeta, \xi, \eta) \mapsto \exp_{f ( \exp_{h(\zeta)} (t\xi) )} (\eta)
\end{equation*}
are a homotopy between $p'': (f \circ p')^*\bbD_{\wt Z} \to \wt Z$ and $\hat p: (f \circ h)^*\bbD_{\wt Z} \to \wt Z$  
within submersions. Hence we can use the previous lemma to guarantee the existence of $\Upsilon$. 
\end{proof}

Instead of the usual $L^2$ inner product, we will consider the quadratic form
\begin{equation*}
\begin{gathered}
	Q_{\wt X}: \CI( \wt X ; \Iie \Lambda^* \otimes \cV ) \times \CI( \wt X ; \Iie \Lambda^* \otimes \cV ) \to C^*_r\Gamma \\
	Q_{\wt X}(u,v) = \int_{\wt X} u \wedge v^*
\end{gathered}
\end{equation*}
and also the analogous $Q_{\wt Y}$, $Q_{\bbD_{\wt Y}}$, $Q_{f^*\bbD_{\wt Y}} $. Recall that any element of $\CI( \wt X ; \Iie \Lambda^* \otimes \cV )$ vanishes at the boundary of $\wt X$ so that $Q_{\wt X}$ is indeed well defined. (We point out that the corresponding quadratic form in  Hilsum-Skandalis \cite[page 87]{H-S} is given by $ i^{ |u| (n- |u|)} Q_{\wt X}(u,v).$)
We denote the adjoint of an operator $T$ with respect to $Q_{\wt X}$ (or $Q_{\wt Y}$) by $T'$. Thus, for instance, $d_{\cV}' = - d_{\cV}$.

From Theorem \ref{theo:kk}, we know that the signature data on $\hat X$ defines an element of $K_{\dim X}(C^*_r\Gamma)$ and similarly for the data on $\hat Y$.
Hilsum and Skandalis gave a criterion for proving that two classes are the same which we now employ.

\begin{proposition}\label{prop:h-ska} Consider a stratum-preserving homotopy equivalence 
$f: \hat X \rightarrow \hat Y,$ where $\dim \hat X=n $ is even.  Denote still by $f$ the induced map $\wt X \rightarrow \wt Y.$
The bounded operator 
\begin{equation*}
	HS(f):L^2_{\iie}(\wt Y; \Iie \Lambda^* \otimes \cV') \to L^2_{\iie}(\wt X; \Iie \Lambda^* \otimes \cV)
\end{equation*}
satisfies the following properties:
\begin{itemize}
 \item [a)] $HS(f) d_{\cV'} = d_{\cV} HS(f)$ and $HS(f)(\Dom d_{\cV'}) \subseteq \Dom d_{\cV}$
 \item [b)] $HS(f)$ induces an isomorphism $HS(f): \ker d_{\cV'}/ \Im d_{\cV'} \to \ker d_{\cV} / \Im d_{\cV}$
 \item [c)] There is a bounded operator $\Upsilon$ on a Hilbert module associated to $Y$ 
 such that $\Upsilon(\Dom d_{\cV'}) \subseteq \Dom d_{\cV'}$ and
	$\Id - HS(f)'HS(f) = d_{\cV'} \Upsilon + \Upsilon d_{\cV'}$
 \item [d)]
	There is a bounded self-adjoint involution $\eps$ on $Y$ such that $\eps(\Dom d_{\cV'}) \subseteq \Dom d_{\cV'}$, 
	which commutes with $\Id - HS(f)'HS(f)$ and anti-commutes with $d_{\cV'}$.
\end{itemize}
Hence the signature data on $\hat X$ and $\hat Y$ define the same element of $K_{0}(C^*_r\Gamma)$. 
\end{proposition}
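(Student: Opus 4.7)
The plan is to verify properties (a)-(d) in order, following the template of Hilsum-Skandalis \cite{H-S}, and then to invoke the abstract K-theoretic criterion from that paper to obtain equality of the signature index classes in $K_0(C^*_r\Gamma)$. The three main analytic inputs are: (i) that the iterated incomplete edge signature complex on a Witt space is essentially self-adjoint (Part I), so the maximal and minimal extensions of $d_{\cV}$ coincide; (ii) that $L^2_{\iie}$-cohomology of a Witt space with coefficients in $\cV$ computes middle-perversity intersection cohomology; and (iii) the composition and homotopy formulas for $HS(\cdot)$ recorded in Lemmas \ref{lem:Htpy} and \ref{lem:Comp}.

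For (a), I would observe that $HS(f)=(\pi_{\wt X})_*(\cT \wedge p^*)$ is the composition of pullback along the submersion $p$, wedging with the closed Thom form $\cT$, and fiber integration along $\pi_{\wt X}$; each of these commutes with the flat differential (by naturality of $d$, by closedness of $\cT$, and by fibered Stokes for closed fibers, respectively). Combined with boundedness in $L^2_{\iie}$ and input (i), this gives both halves of (a). For (b), I would identify the map induced by $HS(f)$ on $L^2_{\iie}$-cohomology with the intersection cohomology pullback $f^*$ via (ii); then (b) reduces to Friedman's theorem \cite{friedman} that stratum-preserving homotopy equivalences are isomorphisms on intersection cohomology.

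For (c) I would fix a stratified homotopy inverse $g: \hat Y \to \hat X$ together with stratified homotopies $g \circ f \simeq \id_{\hat X}$ and $f \circ g \simeq \id_{\hat Y}$. Lemma \ref{lem:Comp} yields a chain homotopy between $HS(g) \circ HS(f)$ and $HS(g \circ f)$, and Lemma \ref{lem:Htpy} yields one between $HS(g \circ f)$ and $\Id$. The delicate step is to identify the $Q_{\wt Y}$-transpose $HS(f)'$ with $HS(g)$ up to chain homotopy: using $(\cT \wedge p^*)' = p_*(\,\cdot\,) \wedge \cT$ (up to signs) one rewrites $HS(f)'$ as an HS-type operator for an auxiliary map, which via a smooth family of submersions and Thom forms and another application of Lemma \ref{lem:Htpy} is chain homotopic to $HS(g)$. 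Concatenating the three chain homotopies produces the required $\Upsilon$.

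For (d), I would let $\eps$ be the rescaled Hodge $\tau$-involution on $\Iie\Lambda^*$, which is self-adjoint with respect to $Q_{\wt Y}$, squares to the identity, anti-commutes with $d_{\cV'}$, and preserves $\Dom d_{\cV'}$ by (i); after symmetrising the $\Upsilon$ from (c) by averaging with its $\eps$-conjugate, the involution commutes with $\Id - HS(f)'HS(f)$. With (a)-(d) in place, the abstract Hilsum-Skandalis machinery, which is purely algebraic at the level of Hilbert $C^*_r\Gamma$-modules, produces a continuous path of unbounded Kasparov $(\bbC, C^*_r\Gamma)$-modules interpolating between the two signature data, yielding equality of the index classes. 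The main obstacle is the transpose identification in (c): the HS formalism was designed for closed manifolds, and here one must check that all auxiliary operators (projections along submersions, Thom forms, exponentials) give bounded endomorphisms of the $L^2_{\iie}$ Hilbert modules over a Witt pseudomanifold, with iie-domains preserved at every step.
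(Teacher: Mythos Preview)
Your treatment of (a) matches the paper. For (b) you take a different route---identifying $L^2_{\iie}$-cohomology with intersection cohomology and invoking Friedman---whereas the paper stays entirely inside the HS formalism: it builds $HS(h)$ for a homotopy inverse $h$ and uses Lemmas~\ref{lem:Comp} and~\ref{lem:Htpy} to show $HS(h)$ and $HS(f)$ are mutual inverses on cohomology. Your route is plausible but requires an extra identification of the map induced by $HS(f)$ with $f^*$ under the de~Rham--intersection isomorphism, which you have not supplied.

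There is a genuine gap in your (c). You propose to show that $HS(f)'$ is chain homotopic to $HS(g)$, but this is neither what the paper does nor clearly feasible. The $Q$-transpose computes to $HS(f)'v = p_*(\cT \wedge \pi_{\wt X}^* v)$, which pushes forward along the submersion $p: f^*\bbD_{\wt Y}\to\wt Y$ and pulls back along the projection $\pi_{\wt X}$; this is structurally the \emph{opposite} of an HS-type operator, so Lemma~\ref{lem:Htpy} does not apply to compare it with $HS(g)$. The paper instead composes and simplifies: using that $\bbD(f)$ sends relative fundamental classes to relative fundamental classes, it shows that $HS(f)'HS(f)u$ collapses to $e_*\bigl(\wt\cT \wedge \pi_{\wt Y}^*(\pi_{\wt Y})_*(\wt\cT \wedge e^* u)\bigr)$, an expression on $\wt Y$ alone with no trace of $\wt X$ or of any homotopy inverse. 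One then applies Lemma~\ref{lem:Htpy} to the homotopy $e\simeq\pi_{\wt Y}$ to get $\Upsilon$.

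Your choice in (d) is also problematic. The paper takes $\eps u = (-1)^{|u|}u$; since $HS(f)$ and its $Q$-transpose preserve form degree, this $\eps$ commutes with $HS(f)'HS(f)$ for free. The Hodge $\tau$-involution swaps degrees $p$ and $n-p$ and has no reason to commute with the pull-back/push-forward expression above. Your proposed fix of symmetrising $\Upsilon$ is circular: replacing $\Upsilon$ by $\tfrac12(\Upsilon-\eps\Upsilon\eps)$ and using $\eps d=-d\eps$ gives $d\Upsilon'+\Upsilon'd=\tfrac12\bigl[(d\Upsilon+\Upsilon d)+\eps(d\Upsilon+\Upsilon d)\eps\bigr]$, which equals $\Id-HS(f)'HS(f)$ only if $\eps$ \emph{already} commutes with it.
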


\begin{proof}
The final sentence follows from ($a$)-($d$) and Lemma 2.1 in Hilsum-Skandalis \cite{H-S}.

In \cite{ALMP1} we showed that the signature operator has a unique closed extension, it follows that so do $d_{\cV}$ and $d_{\cV'}$ (see, e.g., \cite[Proposition 11]{Hunsicker-Mazzeo}). Since this domain is the minimal domain, as soon as we know that an operator is bounded in $L^2_{\iie}$ and commutes or anticommutes with these operators, we know that it preserves their domains.

a) Since $HS(f)$ is made up of pull-back, push-forward, and exterior multiplication by a closed form, $HS(f)d_{\cV'} = d_{\cV}HS(f)$. 

b) From ($a$) we know that $HS(f)$ induces a map $\ker d_{\cV'}/ \Im d_{\cV'} \to \ker d_{\cV} / \Im d_{\cV}$.
Let $h$ denote a homotopy inverse of $f$ and consider
\begin{equation*}
	HS(h): L^2_{\iie}(\wt X; \Iie \Lambda^* \otimes \cV) \to L^2_{\iie}(\wt Y; \Iie \Lambda^* \otimes \cV').
\end{equation*}
We know from Lemma \ref{lem:Comp} that $HS(f \circ h)$ and $HS(h) \circ HS(f)$ induce the same map in cohomology and, from Lemma \ref{lem:Htpy} that $HS(f \circ h)$ induces the same map as the identity. Since the same is true for $HS(f \circ h)$ we conclude that $HS(h)$ and $HS(f)$ are inverse maps in cohomology and hence each is an isomorphism.

c) Recall that $p: f^* \bbD_{\wt Y} \rightarrow \wt Y$, being a proper submersion, is a fibration. Choose a Thom form 
$\wt \cT$ for the fibration $\pi_{\wt Y}: \bbD_{\wt Y} \rightarrow \wt Y$ so that $\bbD_{\wt Y} (f)^* \wt  \cT$ defines a Thom form 
for the fibration $\pi_{\wt X}: f^* \bbD_{\wt Y} \rightarrow \wt X.$
These two facts allow us to carry out the following computation, where $u \in C^\infty (\wt Y; \Iie \Lambda^* \otimes \cV')$ and 
$v \in C^\infty (\wt X; \Iie \Lambda^* \otimes \cV).$
\begin{equation*}
\begin{split}
	Q_{\wt X}(HS(f)u, v) 
	&= Q_{\wt X}\lrpar{ (\pi_{\wt X})_*( \bbD_{\wt Y} (f)^* \wt \cT \wedge p^*u), v } \\
	&= Q_{f^*\bbD_{\wt Y}} (\bbD_{\wt Y} (f)^* \wt \cT \wedge p^*u, \pi_{\wt X}^*v ) \\
	&= (-1)^{ n(n-| v |)} Q_{f^*\bbD_{\wt Y}} ( p^*u, \bbD_{\wt Y} (f)^* \wt \cT \wedge \pi_{\wt X}^*v ) \\
	&= (-1)^{ n(n-| v |)}Q_{\wt Y} ( u, p_* (\bbD_{\wt Y} (f)^* \wt \cT \wedge \pi_{\wt X}^*v ) ).
\end{split}
\end{equation*}
Since $n$ is even this shows that 
$HS(f)' v =  p_* (\bbD_{\wt Y} (f)^* \wt \cT \wedge \pi_{\wt X}^*v )$
and hence
\begin{equation*}
	HS(f)'HS(f)u = p_* ( \bbD_{\wt Y} (f)^* \wt \cT \wedge \pi_{\wt X}^*  (\pi_{\wt X})_*((\bbD_{\wt Y} (f)^* \wt \cT \wedge p^*u) ) ).
\end{equation*}
Next one checks easily that, for any differential form $ \omega $ on $\bbD_{\wt Y}$,
$$ \bbD_{\wt Y} (f)^*  \pi^*_{\wt Y}   (\pi_{\wt Y})_* \omega =  \pi^*_{\wt X}  (\pi_{\wt X})_* \bbD_{\wt Y} (f)^* \omega.
$$ 
and so, from the identity  $p^* = \bbD_{\wt Y} (f)^* e^*,$
\begin{equation*}
	HS(f)'HS(f)u = (e \circ \bbD_{\wt Y} (f))_* ( \bbD_{\wt Y} (f)^*(\,  \wt \cT \wedge \pi_{\wt Y}^*  (\pi_{\wt Y})_*( \wt \cT \wedge e^*u) ) ).
\end{equation*}

Now observe that  $\bbD_{\wt Y} (f): f^* \bbD_{\wt Y} \rightarrow  \bbD_{\wt Y},$ being an homotopy equivalence of manifolds with corners,
sends the relative fundamental class of $f^* \bbD_{\wt Y}$ to the relative fundamental class of $ \bbD_{\wt Y}$ and so
$$
Q_{f^* \bbD_{\wt Y}} ( \bbD_{\wt Y} (f)^*\alpha  , \bbD_{\wt Y} (f)^* \beta ) =Q_{ \bbD_{\wt Y}} (\alpha  ,  \beta ).
$$ From this identity, the previous equation, and the fact that $e$ induces a fibration, one checks easily that
$$
 Q_{\wt Y} ( HS(f)'HS(f)u , w)= Q_{\wt Y} ( e_* (\, \wt \cT \wedge \pi_{\wt Y}^*  (\pi_{\wt Y})_*( \wt \cT \wedge e^*u) ) , w)
$$
and hence
$$
HS(f)'HS(f)u = e_* (\,\wt \cT \wedge  \pi_{\wt Y}^*  (\pi_{\wt Y})_*( \wt \cT \wedge e^*u) ).
$$
Finally, $e$ is homotopic to $\pi_{\wt Y}$, and since
$$
(\pi_{\wt Y})_* (\, \wt \cT \wedge \pi_{\wt Y}^*  (\pi_{\wt Y})_*( \wt \cT \wedge \pi_{\wt Y}^*u) )= (\pi_{\wt Y})_* (\wt \cT \wedge \pi_{\wt Y}^*u) = u,
$$
Lemma \ref{lem:Htpy}, $\Id - HS(f)'HS(f) = d_{\cV'} \Upsilon + \Upsilon d_{\cV'}$ as required.

d) It suffices to take $\eps u = (-1)^{|u|} u$.
\end{proof}

\noindent {\bf Remark.} Consider now the case of   an odd dimensional Witt space $ \hat{X} $ endowed with a conic iterated metric $g$ and a reference
map $r: \hat{X} \rightarrow B \Gamma.$ We have defined in Part I  the higher signature index class 
$ \Ind\, (\wt \eth_{\sign}) \in KK_1(\bbC, C^*_r \Gamma)\simeq K_1(C^*_r \Gamma)  $ associated to the twisted signature 
operator 
defined by the data $ (\hat{X} , g, r) $. Recall that there is a suspension  isomorphism $\Sigma: K_1(C^*_r \Gamma) \leftrightarrow \wt K _0(C^*_r \Gamma \otimes C(S^1))$ which is induced by taking the
Kasparov product with the Dirac operator of $S^1$. Consider 
the even dimensional Witt space $\hat{X} \times S^1$ endowed with the obvious stratification and with  the reference map 
$$
 r \times \Id_{S^1}: \hat{X} \times S^1 \rightarrow B ( \Gamma \times \mathbb{Z}) \simeq B \Gamma \times S^1.
$$ 
As explained in \cite[p. 624]{LLP}, \cite[\S 3.2]{LPJFA}, the suspension of  the odd index class 
$ \Ind\, (\wt \eth_{\sign}) \in KK_1(\bbC, C^*_r \Gamma)\simeq K_1(C^*_r \Gamma) $
is equal to the even signature index class associated to the data
$ (\hat{X} \times S^1, g \times (d \theta)^2 , r  \times \Id_{S^1} ) .$ 
If now $f:  \hat{X}\to   \hat{Y} $  is  a stratified homotopy equivalence of odd dimensional
Witt spaces, then $f$ induces a stratified homotopy equivalence from $\hat{X}\times S^1$ to    
$\hat{Y}\times S^1$. By the previous Proposition the signature index classes of
$\hat{X}\times S^1$ and     
$\hat{Y}\times S^1$ are the same. Then using the suspension isomorphism $\Sigma$,  we deduce finally that the odd 
signature index classes associated to $\hat{X}$ and $\hat{Y} $ are the same.
Thus, the (smooth) stratified homotopy invariance of the signature index class is established
for Witt spaces of arbitrary dimension.

\section{Assembly map and stratified homotopy invariance of  higher signatures}
Consider  the assembly map $\beta: K_* (B\Gamma) \to K_*(C^*_r \Gamma)$.
The rationally injectivity  of this map is known as the  strong Novikov conjecture for $\Gamma$. In the closed case
it implies that the Novikov higher signatures are oriented homotopy invariants. The rational injectivity
of the assembly map is still unsettled in general, although it is known to hold for large classes
of discrete groups; for closed manifolds having these fundamental  groups the higher signatures are thus homotopy invariants.
The following  is the main result of this second paper:

\begin{theorem}\label{theo:main}
Let $\hat X$ be an oriented Witt space,  $r: \hat X\to B\pi_1 (\hat X)$ the classifying map
for the universal cover, and let $\Gamma:= \pi_1 (\hat X)$.
If the assembly map $K_* (B\Gamma) \to K_*(C^*_r \Gamma)$
is rationally injective, then the Witt-Novikov  higher signatures
$$\{ <\alpha, r_* L_* (\hat X)>, \alpha\in H^* (B\Gamma,\bbQ) \}$$
are stratified homotopy invariants
\end{theorem}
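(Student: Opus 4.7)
The plan is to reduce the purely topological statement of Theorem \ref{theo:main} to the analytic stratified homotopy invariance of the signature index class established in the previous section, by running it through the assembly map and the Chern character. Suppose $f:\hat X \to \hat Y$ is a stratified homotopy equivalence between oriented Witt spaces, and let $r:\hat Y\to B\Gamma$ classify the universal cover, where $\Gamma=\pi_1(\hat Y)\cong \pi_1(\hat X)$ via $f_*$. Set $r':=r\circ f:\hat X\to B\Gamma$; this is then a classifying map for the universal cover of $\hat X$. The aim is to prove that $r'_* L_*(\hat X)=r_* L_*(\hat Y)$ in $H_*(B\Gamma,\bbQ)$, for then pairing with any $\alpha\in H^*(B\Gamma,\bbQ)$ yields $\sigma_\alpha(\hat X)=\sigma_\alpha(\hat Y)$.

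First I would invoke Proposition \ref{prop:h-ska} in the even-dimensional case, and the Remark immediately following it (which handles the odd-dimensional case by crossing with $S^1$ and using the suspension isomorphism in $K$-theory) to conclude
\begin{equation*}
\Ind(\wt\eth_{\sign}(\hat X))=\Ind(\wt\eth_{\sign}(\hat Y))\quad\text{in }K_*(C^*_r\Gamma),
\end{equation*}
where on the $\hat X$ side the Mishchenko bundle is built from $r'$ and on the $\hat Y$ side from $r$.

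Next, Corollary \ref{cor:assembly} identifies each side of the above equality with the image under the assembly map of the pushed-forward $K$-homology signature class:
\begin{equation*}
\beta\bigl(r'_*[\eth_{\sign}(\hat X)]\bigr)=\beta\bigl(r_*[\eth_{\sign}(\hat Y)]\bigr)\quad\text{in }K_*(C^*_r\Gamma).
\end{equation*}
Tensoring with $\bbQ$ and using the hypothesis that $\beta$ is rationally injective gives
\begin{equation*}
r'_*[\eth_{\sign}(\hat X)]_\bbQ=r_*[\eth_{\sign}(\hat Y)]_\bbQ\quad\text{in }K_*(B\Gamma)\otimes\bbQ.
\end{equation*}

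Finally, I would apply the rationalized homology Chern character $\ch_*:K_*(B\Gamma)\otimes\bbQ\to H_*(B\Gamma,\bbQ)$ and use its naturality together with Theorem \ref{theo:cmwu} to convert the equality of $K$-homology classes into an equality of $L$-classes:
\begin{equation*}
r'_*L_*(\hat X)=r'_*\ch_*[\eth_{\sign}(\hat X)]_\bbQ=r_*\ch_*[\eth_{\sign}(\hat Y)]_\bbQ=r_*L_*(\hat Y).
\end{equation*}
Pairing this identity in $H_*(B\Gamma,\bbQ)$ with an arbitrary $\alpha\in H^*(B\Gamma,\bbQ)$ then yields $\sigma_\alpha(\hat X)=\sigma_\alpha(\hat Y)$, completing the proof. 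The substantive content has already been done: the main obstacle was the analytic stratified homotopy invariance of $\Ind(\wt\eth_{\sign})$ (Proposition \ref{prop:h-ska} and the Remark following it) and the identification of $L_*(\hat X)$ with $\ch_*[\eth_{\sign}]_\bbQ$ (Theorem \ref{theo:cmwu}); the remaining argument is the standard diagram chase through assembly and Chern character.
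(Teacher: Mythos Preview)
Your proposal is correct and follows essentially the same four-step argument as the paper: stratified homotopy invariance of the index class (Proposition \ref{prop:h-ska} and the subsequent Remark), identification with the assembly of the $K$-homology class (Corollary \ref{cor:assembly}), rational injectivity of $\beta$, and finally the Chern character identification with the $L$-class (Theorem \ref{theo:cmwu}). Apart from a harmless swap of which space carries the prime, your write-up matches the paper's proof step for step.
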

 
 \begin{proof}
The proof proceeds in four steps and is directly inspired by Kasparov's proof in the closed case, see
for example \cite{Kasparov-contemporary}  and the references therein :

   \begin{enumerate}
  \item Consider  $({\hat X}', r':{\hat X}'\to B\Gamma)$
 and $(\hat X,r: \hat X\to B\Gamma)$, with $r=r'\circ f$ and  $f: \hat X\to {\hat X}'$
 a stratified homotopy equivalence between (smoothly stratified) oriented Witt spaces. Denote by $ \wt \eth_{\sign}' $ the twisted signature operator associated to
 $({\hat X}', r':{\hat X}'\to B\Gamma)$.
 We have proved that
 $$ \Ind (\wt \eth_{\sign}) = \Ind (\wt \eth_{\sign}')  \quad \text{in}\quad K_* (C^*_r \Gamma)\otimes \bbQ  \,.$$
 \item We know that 
 the assembly map sends $r_* [\eth_{\sign}]\in K_* (B\Gamma)$ to the Witt index class $\Ind (\wt \eth_{\sign}).$
 More explicitly:
 $$\beta (r_* [\eth_{\sign}])= \Ind (\wt \eth_{\sign})\quad\text{in}\quad K_*(C^*_r \Gamma)\otimes \bbQ$$
  \item We deduce from the assumed rational injectivity of the assembly map  that
 $$ r_* [\eth_{\sign}] = (r')_* [\eth_{\sign}']\quad \text{in}\quad K_* (B\Gamma)\otimes \bbQ.$$
 \item Since we know from Cheeger/Moscovici-Wu that $\Ch_* (r_* [\eth_{\sign}])=r_* (L_* (\hat X))$
 in $ H_* (B\Gamma,\bbQ)$ we finally get that 
 $$ r_* (L_* (\hat X)) = (r')_* (L_* ({\hat X}'))\quad \text{in}\quad H_* (B\Gamma,\bbQ)$$
 which obviously implies the stratified homotopy invariance of  the higher signatures
 $\{ <\alpha, r_* L_* (\hat X)>, \alpha\in H^* (B\Gamma,\bbQ) \}$.  
\end{enumerate}
\end{proof}

Examples of discrete groups for which the assembly map is rational injective include:
amenable groups,
discrete subgroups of  Lie groups with a finite number of connected components, Gromov hyperbolic groups, discrete groups acting properly
on bolic spaces, 
countable subgroups of $GL(\bbK)$ for $\bbK$ a field.

\section{The symmetric signature on Witt spaces}

\subsection{The symmetric signature in the closed case}

Let $X$ be a closed orientable manifold and let $r:X\to B\Gamma$ be a classifying map
for the universal cover.
The symmetric signature of Mishchenko, $\sigma (X,r)$,
is a purely topological object \cite{Mish1}. In its most
sophisticated presentation, it  is an element in  the $L$-theory groups $L^* (\bbZ\Gamma)$.
 In general one can define the symmetric signature of any algebraic Poincar\'e complex, i.e.
a  cochain complex of finitely generated $\bbZ\Gamma$-modules satisfying a kind
of Poincar\'e duality. The Mishchenko symmetric signature corresponds to the choice
of the Poincar\'e complex defined by the cochains on the universal cover.
In the treatment of the Novikov conjecture one is in fact interested in a 
less sophisticated invariant, namely the image of $\sigma (X,r)\in L^* (\bbZ\Gamma)$
 under the natural map $\beta_{\bbZ}: L^* (\bbZ\Gamma)\to
L^* (C^*_r\Gamma)$. Recall also that there is a natural isomorphism $\nu: L^* (C^*_r\Gamma)\rightarrow
K_* (C^*_r \Gamma)$ (which is in fact valid for any $C^*$-algebra). The $C^*$-algebraic symmetric
signature is, by definition, the element $\sigma_{_{C^*_r \Gamma}}(X,r) := \nu (\beta_{\bbZ} (\sigma (X,r))$;
thus $\sigma_{_{C^*_r \Gamma}}(X,r)\in K_* (C^*_r \Gamma)$.
The following result, due to Mishchenko and Kasparov, generalizes the equality between the numeric 
index of the signature operator and the topological signature. With the usual notation:
\begin{equation}\label{equality}
\Ind (\wt \eth_{\sign})=\sigma_{_{C^*_r \Gamma}}(X,r)\in K_* (C^*_r \Gamma)
\end{equation}
As a Corollary we see that the signature index class 
is a homotopy invariant; this is the topological approach
to the homotopy invariance of the signature index class that we have mentioned in
the introductory remarks in  Section \ref{sect:shi}. The equality of the $C^*$-algebraic symmetric signature with the 
signature index class (formula (\ref{equality}) above) can be
restated as saying that the following diagram is commutative
\begin{equation}
  \label{diagram}
  \begin{CD}
   \Omega^{{\rm SO}}_* (B\Gamma) @>{{\rm Index}}>> K_i (C^*_r \Gamma)\\
    @VV{\sigma}V  @V{\nu^{-1}}VV\\
  L^* (\bbZ\Gamma)  @>{\beta_{\bbZ}}>> L^*(C^*_r \Gamma).
  \end{CD}
\end{equation}
where $i\equiv *$ mod 2.

\subsection{The symmetric signature on Witt spaces.}

The middle perversity intersection homology groups of a Witt space do satisfy
Poincar\'e duality over the rationals. Thus, it is natural to expect that for  a Witt space
$\hat X$ endowed with a reference map $r:\hat X\to B\Gamma$ it should be possible
to define a symmetric signature $\sigma^{{\rm Witt}}_{\bbQ \Gamma}(X,r) \in L^* (\bbQ \Gamma).$
And indeed,  the definition of symmetric signature
in the Witt context, together with its expected properties, such as Witt bordism invariance,
does appear in the literature, see for example  \cite{Wei},  \cite{Chang},  \cite{Wei-higher-rho}.

However, no rigorous account of this definition has appeared so far,
which is unfortunate given that things are somewhat more complicated than in the smooth
case.
First of all, 
in the first paper of Goresky-MacPherson \cite{GM} the intersection
product is not defined at the level of intersection chains; one needs to pass to
intersection homology. This means that we cannot reproduce the
definition of Mishchenko. 
Note, incidentally, that Greg Friedman has recently written an interesting article
\cite{friedman2}
on the chain-level description of the intersection product in intersection homology. This is based on previous work of Jim
McClure and might very well allow for a rigorous definition, \`a la Mishchenko,  of the symmetric signature in the
Witt context.

The Poincar\'e duality isomorphism for intersection homology was given a fresh 
treatment in the second paper of Goresky-MacPherson \cite{GM2}. 
One could hope to use the self-duality of the intersection chain sheaf
in order to induce a structure of Algebraic Poincar\'e Complex on the geometric intersection chain complex.
In other words, the Algebraic Poincar\'e Complex structure is given first to the complex of global sections of the sheaf and then pulled back to
the actual geometric intersection chain complex  (defined by a good triangulation) using suitable chain homotopies. This is probably the
idea  which is behind the above references.
Now, even this approach, although intuitively clear, needs careful explanation. The problem is, first of all, that by going to sheaves
and their global sections, we lose the finitely generated projective property of our modules. Second, and more
importantly,  all of \cite{GM2} takes place in the derived category, and the existence of explicit chain homotopies from
the sheaf-picture to the geometric-picture, is far from being obvious.

Summarizing: the existence of an explicit structure of  Algebraic Poincar\'e Complex  on the geometric intersection chain complex with values in the local system
defined by the Mishchenko bundle $r^* E\Gamma\times_\Gamma \bbQ\Gamma$ does not seem to be  obvious.

Fortunately, in a recent paper Markus Banagl \cite{banagl-msri} has given an alternative definition
of the symmetric signature on Witt spaces\footnote{Banagl actually concentrates on the more restrictive class of IP spaces, for which an integral symmetric signature,
i.e. an element in $L^* (\bbZ\Gamma)$, exists; it is easy to realize that his construction can be given for the larger
class of Witt spaces, producing, however, an element in $L^* (\bbQ \Gamma)$.} using  surgery techniques as well as previous results
of Eppelmann \cite{Eppelmann}. Banagl's symmetric signature is an element $\sigma^{\rm Witt}_{\bbQ\Gamma} (\hat X,r)\in L^* (\bbQ \Gamma)$;
we refer directly to Banagl's interesting article for the definition and only point out that directly from his construction we can
conclude that

\begin{itemize}
\item the symmetric signature $\sigma^{\rm Witt}_{\bbQ\Gamma}  (\hat X,r)$ is equal to (the rational) Mishchenko's symmetric signature
if $\hat X$ is a closed compact manifold;
\item the Witt symmetric signature is a Witt bordism invariant; it defines a group homomorphism
$\sigma^{\rm Witt} : \Omega^{\rm Witt}_* (B\Gamma) \to L^* (\bbQ \Gamma)$.
\end{itemize}

On the other hand, it is not known whether Banagl's symmetric signature $\sigma^{\rm Witt}_{\bbQ\Gamma} (\hat X,r)$
 is a stratified homotopy invariant.

We define the $C^*$-algebraic Witt symmetric signature as the image of $\sigma^{\rm Witt}_{\bbQ}  (\hat X,r)$
under the composite 
$$L^* (\bbQ \Gamma)\xrightarrow{\beta_{\bbQ}} L^* (C^*_r \Gamma)\xrightarrow{\nu} K_* (C^*_r \Gamma)\,.$$
We denote the $C^*$-algebraic Witt symmetric signature by $\sigma^{{\rm Witt}}_{_{C^*_r \Gamma}}(X,r)$.

\subsection{Rational equality of the Witt symmetric signature and of the signature index class}

Our most general goal would be to prove that
there is a commutative diagram
\begin{equation}
  \label{diagram-witt}
  \begin{CD}
   \Omega^{{\rm Witt}}_*(B\Gamma) @>{{\rm Index}}>> K_i (C^*_r \Gamma)\\
    @VV{\sigma^{\rm Witt}}V  @V{\nu^{-1}}VV\\
  L^* (\bbQ\Gamma)  @>{\beta_{\bbQ}}>> L^*(C^*_r \Gamma).
  \end{CD}
\end{equation}
or, in formul\ae
$$\sigma^{{\rm Witt}}_{_{C^*_r \Gamma}}(X,r)= \Ind (\wt \eth_{\sign}) \quad \text{in} \quad  K_i (C^*_r \Gamma)$$
with $\Ind(\wt \eth_{\sign})$ the signature index class decribed in our first paper.
We shall be happy with a little less, namely the rational equality.

\begin{proposition}\label{prop:magic}
Let $\sigma^{{\rm Witt}}_{C^*_r \Gamma}(X,r)_{\bbQ}$ and $ \Ind (\wt \eth_{\sign})_{\bbQ} $ be
the rational classes,  in the rationalized K-group $K_i (C^*_r \Gamma)\otimes \bbQ$, defined by the Witt symmetric signature 
and by the signature index class.
Then 
\begin{equation}\label{rational}
\sigma^{{\rm Witt}}_{C^*_r \Gamma}(X,r)_{\bbQ}= \Ind (\wt \eth_{\sign})_{\bbQ}\quad\text{in}\quad K_i (C^*_r \Gamma)\otimes \bbQ
\end{equation}
\end{proposition}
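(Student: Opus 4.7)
My plan is to reduce the equality of these two K-theory classes to the classical Mishchenko--Kasparov identification on closed smooth manifolds, by exploiting bordism invariance of both sides together with Proposition \ref{prop:ko}.

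First, I would verify that both
\[
 [\hat X, r] \mapsto \Ind(\wt\eth_{\sign})_{\bbQ} \quad\text{and}\quad [\hat X, r] \mapsto \sigma^{{\rm Witt}}_{C^*_r\Gamma}(\hat X,r)_{\bbQ}
\]
define group homomorphisms $\Omega^{{\rm Witt,s}}_*(B\Gamma)\otimes\bbQ \to K_*(C^*_r\Gamma)\otimes\bbQ$. For the index side, the additivity under disjoint union is built into the index class, and the bordism invariance is exactly Theorem \ref{thm:bordism}, giving the map \eqref{bordsim-inv}. For the symmetric signature side, Banagl's construction recalled in the previous subsection provides a homomorphism $\sigma^{{\rm Witt}}:\Omega^{{\rm Witt}}_*(B\Gamma)\to L^*(\bbQ\Gamma)$, and composition with $\nu\circ\beta_{\bbQ}$ lands in $K_*(C^*_r\Gamma)$; restriction to $\Omega^{{\rm Witt,s}}_*(B\Gamma)$ then tensoring with $\bbQ$ yields the desired homomorphism.

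Next, I would invoke Proposition \ref{prop:ko}: the natural map $\Omega^{{\rm SO}}_*(B\Gamma)\otimes\bbQ\to\Omega^{{\rm Witt,s}}_*(B\Gamma)\otimes\bbQ$ is surjective. Hence it suffices to prove equality of the two homomorphisms on classes $[M,r]$ represented by closed oriented smooth manifolds $M$ equipped with a classifying map $r:M\to B\Gamma$. For such an $M$, two facts identify both sides with the classical objects on the smooth manifold: (i) on a closed manifold with a smooth metric the iterated edge signature operator is the usual signature operator, so $\Ind(\wt\eth_{\sign})$ is the standard twisted signature index class; (ii) by the first bullet point in the discussion of Banagl's construction, $\sigma^{{\rm Witt}}_{\bbQ\Gamma}(M,r)$ coincides with the rational Mishchenko symmetric signature $\sigma(M,r)$ in $L^*(\bbQ\Gamma)$. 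The Mishchenko--Kasparov theorem in the closed case, encapsulated in diagram \eqref{diagram} and equation \eqref{equality}, now gives
\[
\sigma^{{\rm Witt}}_{C^*_r\Gamma}(M,r) = \nu\bigl(\beta_{\bbQ}(\sigma(M,r))\bigr) = \Ind(\wt\eth_{\sign}) \quad \text{in } K_*(C^*_r\Gamma)\otimes\bbQ.
\]
Two homomorphisms on $\Omega^{{\rm Witt,s}}_*(B\Gamma)\otimes\bbQ$ that coincide on the image of $\Omega^{{\rm SO}}_*(B\Gamma)\otimes\bbQ$ are equal by surjectivity, which finishes the proof.

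The main obstacle, I expect, is step (ii): verifying rigorously that Banagl's Witt symmetric signature really does reduce to Mishchenko's on a smooth closed manifold. This is asserted in the first bullet above and in Banagl's paper, so in this write-up I would simply quote it; but any gap in that identification would propagate here. A secondary, much lesser point is to confirm additivity of the Witt index class under disjoint unions and the compatibility of Banagl's map with restriction from $\Omega^{{\rm Witt}}$ to $\Omega^{{\rm Witt,s}}$, both of which are straightforward but must be stated to avoid confusion between the two bordism groups appearing in the paper.
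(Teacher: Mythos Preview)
Your proposal is correct and follows essentially the same route as the paper: both sides are shown to define homomorphisms out of $\Omega^{{\rm Witt,s}}_*(B\Gamma)\otimes\bbQ$, Proposition~\ref{prop:ko} reduces the comparison to closed smooth manifolds, and there the classical Mishchenko--Kasparov result \eqref{equality} gives equality. The paper's proof is slightly terser but the structure and the key ingredients are identical, including the reliance on Banagl's identification of $\sigma^{{\rm Witt}}_{\bbQ\Gamma}$ with the classical symmetric signature in the smooth case.
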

\begin{proof}
We already  know from \cite{banagl-msri} that the rationalized symmetric signature 
defines  a homomorphism from $(\Omega^{{\rm Witt}}_*(B\Gamma))_{\bbQ}$ to $K_i (C^*_r \Gamma)\otimes \bbQ$.
However, it also clearly defines a homomorphism $(\Omega^{{\rm Witt},s}_*(B\Gamma))_{\bbQ}\to  K_i (C^*_r \Gamma)\otimes \bbQ$,
exactly as the signature index class.
For notational convenience, let $\mathcal{I}: (\Omega^{{\rm Witt},s}_*(B\Gamma))_{\bbQ}\to  K_i (C^*_r \Gamma)\otimes \bbQ$
be the (Witt) signature  index morphism; let $\mathcal{I}': (\Omega^{{\rm Witt},s}_*(B\Gamma))_{\bbQ}\to  K_i (C^*_r \Gamma)\otimes \bbQ$
be the (Witt) symmetric signature morphism. We want to show that 
$$\mathcal{I} = \mathcal{I} '\,.$$
We know from Proposition \ref{prop:ko}  that the natural map 
$\Omega^{{\rm SO}}_*(B\Gamma) \rightarrow  \Omega^{{\rm Witt},s}_*(B\Gamma)$ induces   a rational surjection
$$  s:  ( \Omega^{{\rm SO}}_*(B\Gamma) )_{\bbQ}\rightarrow  (\Omega^{{\rm Witt},s}_*(B\Gamma))_{\bbQ}. $$
In words, a  smoothly stratified Witt space $X$ with reference map $r:X\to B\Gamma$ is smoothly stratified
Witt bordant to $k$-copies of a closed oriented
compact manifold $M$ with reference map $\rho: M\to B\Gamma$.
Moreover, we remark that the Witt  index classes and the Witt symmetric signature of an oriented  closed compact manifold
coincide with the classic signature index class and the Mishchenko symmetric signature. 
Then
$$\mathcal{I} ([X,r])= \mathcal{I} (k[M,\rho])=\mathcal{I}' (k[M,\rho]) = \mathcal{I}' ([X,r])\,$$
with the first and third equality following from the above remark and the second equality obtained using 
the fundamental result of Kasparov and Mishchenko on closed manifolds.
The proof is complete.
\end{proof}

The above Proposition  together with Proposition \ref{prop:h-ska} implies at once the following result:

\begin{corollary}\label{cor:shi-of-banagl}
The $C^*$-algebraic symmetric signature defined by Banagl is a rational stratified 
homotopy invariant.
\end{corollary}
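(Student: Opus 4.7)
The plan is to combine Proposition \ref{prop:magic} with the stratified homotopy invariance of the analytic signature index class proved in Proposition \ref{prop:h-ska} (and extended to the odd-dimensional case via the $S^1$-suspension trick in the Remark after Proposition \ref{prop:h-ska}). The point is that although Banagl's symmetric signature $\sigma^{\rm Witt}_{\bbQ\Gamma}(\hat X,r)\in L^*(\bbQ\Gamma)$ is not a priori known to be a stratified homotopy invariant, its image under the composition $L^*(\bbQ\Gamma)\xrightarrow{\beta_\bbQ}L^*(C^*_r\Gamma)\xrightarrow{\nu}K_*(C^*_r\Gamma)$ can be identified rationally with the analytic index class, which \emph{is} a stratified homotopy invariant.

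More concretely, suppose $f:\hat X\to\hat X'$ is a stratified homotopy equivalence of oriented Witt spaces, and let $r':\hat X'\to B\Gamma$ be a reference map so that $r:=r'\circ f:\hat X\to B\Gamma$ classifies a lift of the universal cover. First I would apply Proposition \ref{prop:magic} to both $(\hat X,r)$ and $(\hat X',r')$ to obtain
\begin{equation*}
\sigma^{\rm Witt}_{C^*_r\Gamma}(\hat X,r)_\bbQ=\Ind(\wt\eth_{\sign})_\bbQ,\qquad
\sigma^{\rm Witt}_{C^*_r\Gamma}(\hat X',r')_\bbQ=\Ind(\wt\eth_{\sign}')_\bbQ
\end{equation*}
in $K_i(C^*_r\Gamma)\otimes\bbQ$. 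Next I would invoke Proposition \ref{prop:h-ska} (in the even-dimensional case) or its odd-dimensional counterpart established in the Remark, which asserts $\Ind(\wt\eth_{\sign})=\Ind(\wt\eth_{\sign}')$ in $K_*(C^*_r\Gamma)$. Chaining the three identifications gives
\begin{equation*}
\sigma^{\rm Witt}_{C^*_r\Gamma}(\hat X,r)_\bbQ=\sigma^{\rm Witt}_{C^*_r\Gamma}(\hat X',r')_\bbQ\quad\text{in}\quad K_i(C^*_r\Gamma)\otimes\bbQ,
\end{equation*}
which is exactly the claim.

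There is essentially no obstacle here beyond being careful about hypotheses: Proposition \ref{prop:magic} is stated for pairs $(\hat X,r)$ with $\hat X$ smoothly stratified (it goes through $\Omega^{\rm Witt,s}_*(B\Gamma)$), and Proposition \ref{prop:h-ska} is proved under the same smooth stratification hypothesis, so the two results match up directly and no additional approximation or resolution argument is needed. The only subtlety worth highlighting is the parity of $\dim\hat X$: Proposition \ref{prop:h-ska} as stated only handles the even-dimensional case, so for odd-dimensional Witt spaces one must first cross with $S^1$, apply the even-dimensional stratified homotopy invariance to $f\times\Id_{S^1}:\hat X\times S^1\to\hat X'\times S^1$, and then use the suspension isomorphism $\Sigma:K_1(C^*_r\Gamma)\xrightarrow{\sim}\wt K_0(C^*_r\Gamma\otimes C(S^1))$ to descend the equality back to $K_1(C^*_r\Gamma)$, exactly as in the Remark following Proposition \ref{prop:h-ska}.
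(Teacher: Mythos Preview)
Your proposal is correct and follows exactly the paper's approach: the corollary is stated as an immediate consequence of Proposition~\ref{prop:magic} together with Proposition~\ref{prop:h-ska}, and your write-up simply unpacks that implication (with the added care of invoking the Remark for the odd-dimensional case).
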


This Corollary does not seem to be obvious from a  purely topological point of view.

\section{Epilogue}

Let $\hat X$ be an orientable Witt pseudomanifold with fundamental group $\Gamma$. 
We endow the regular part of $\hat X$ with an iterated conic metric $g$.
Let $\hat X ^\prime$ 
be a Galois $\Gamma$-covering and $r: \hat X\to B\Gamma$ a classifying map for $ \hat X ^\prime$. 
We now restate once more  the signature package for the pair $(\hat X,r:\hat X\to B\Gamma)$ indicating precisely where 
the individual items have been established in our two papers.

\begin{list}
 {(1)} \item The signature operator defined by the iterated conic metric $g$
with values in the Mischenko bundle $r^* E\Gamma \times_\Gamma C^*_r\Gamma$
defines  a signature index class  $\Ind (\wt \eth_{\sign})\in K_* (C^*_r \Gamma)$, $* \equiv \dim X \;{\rm mod}\; 2$.\\
{\it This was established in Theorem 7.6 of Part I.}
 \end{list}

\begin{list}
 {(2)} \item The signature index class is a (smooth) Witt bordism invariant; more precisely  it defines a group homomorphism 
$\Omega^{{\rm Witt},s}_* (B\Gamma) \to K_* (C^*_r \Gamma)\otimes \bbQ$.\\
{\it This is Theorem \ref{thm:bordism} in this paper, together with \eqref{bordsim-inv}.}
\end{list}

\begin{list}
 {(3)}\item The signature 
index class is a stratified homotopy invariant.\\
{\it This is Proposition \ref{prop:h-ska} in this paper.}
\end{list}

\begin{list}
 {(4)}
\item  There is a  K-homology signature class $[\eth_{\sign}]\in K_* (X)$ whose Chern
 character is, rationally, the  homology L-Class of Goresky-MacPherson.\\
  {\it This is Theorem 7.2 in Part I and Theorem \ref{theo:cmwu} of this paper.}
  \end{list}

 \begin{list}
 {(5)}
\item The assembly map $\beta: K_* (B\Gamma)\to K_* (C^*_r\Gamma)$ sends the
class $ r_* [\eth_{\sign}]$ into $\Ind (\wt \eth_{\sign})$.\\
{\it This is Corollary 7.7 in Part I.}
\end{list}

\begin{list}
 {(6)}
\item If  the assembly map  is rationally injective one  can deduce from the above results the homotopy invariance of the
 Witt-Novikov higher signatures.\\{\it This is Theorem \ref{theo:main} in this paper.}
\end{list}

 \begin{list}
 {(7)} \item There is a topologically defined $C^*$-algebraic symmetric signature 
 $\sigma^{{\rm Witt}}_{C^*_r \Gamma}(X,r)$\\$\in  K_* (C^*_r \Gamma)$ which is equal to the analytic index class
 $ \Ind (\wt \eth_{\sign})$ rationally.\\
{\it  This is Banagl's construction together with our Proposition \ref{prop:magic} in this paper.}
   \end{list}


\begin{thebibliography}{999}

\bibitem{ALMP1}
	\script{Albin, P;Leichtnam, E.; Piazza, P; Mazzeo, R.}
	{\em The signature package on Witt spaces, I. Index classes.}
	arXiv 0906.1568





\bibitem{Banagl}
	\script{Banagl, M.}
	{\em Topological invariants of stratified spaces.}
	Springer Monographs in Mathematics.
	Springer, Berlin, 2007. xii+259 pp. 	
	
\bibitem{banagl-msri}
	\script{Banagl, M.}
	{\em The signature of singular spaces and its refinement to generalized homology theory.}
To appear in the Proceedings MSRI workshop "Topology of Stratified 
                      Spaces" Sept. 2008.	
	
\bibitem{Blackadar}
	\script{Blackadar, B.}
	{\em $K$-theory for operator algebras.}
	Second edition. Mathematical Sciences Research Institute Publications, 5.
	Cambridge University Press, Cambridge, 1998. xx+300 pp. ISBN: 0-521-63532-2

\bibitem{BHS}
	\script{Brasselet, J-P. ; Hector, G.; Saralegi, M.}
	 {\em Th\`eor\'eme de De Rham pour les Vari\'et\'es Stratifi\'ees.}
	Ann. Global Anal. Geom.  {\bf 9}  (1991),  no. 3, pages  211-243.


\bibitem{Chang}
\script{Chang, S.}
 {\em On conjectures of Mathai and Borel},
Geometriae Dedicata {\bf 106} (2004), 161--167.




\bibitem{Cheeger-symp}
	\script{Cheeger, J.}
	{\em On the Hodge theory of Riemannian pseudomanifolds.}
	Geometry of the Laplace operator (Proc. Sympos. Pure Math., Univ. Hawaii, Honolulu, Hawaii, 1979),  pp. 91--146,
	Proc. Sympos. Pure Math., XXXVI, Amer. Math. Soc., Providence, R.I., 1980.

\bibitem{Ch}
	\script{Cheeger, J.}
	{\em Spectral geometry of singular Riemannian spaces. }
	J. Differential Geom.  {\bf 18}  (1983),  no. 4, 575-657.



\bibitem{Eppelmann}
\script{Eppelmann,T.}
{\em Signature homology and symmetric L-Theory.} Ph.D. Thesis. University of Heidelberg.
2007.

\bibitem{friedman}
\script{Friedman G.} {\em Stratifed
fibrations and the intersection homology of the regular neighborhoods of bottom strata.}
Topology Appl. {\bf 134} (2003), 69--109.

\bibitem{friedman2}
\script{Friedman G.} {\em On the chain-level intersection pairing for PL pseudomanifolds.}
Homology, Homotopy and Applications 11 (2009), 261-314.

\bibitem{GM}
	\script{Goresky, M.; MacPherson, R.}
	{\em Intersection homology theory.}
	Topology  {\bf 19}  (1980), no. 2, 135--162.

\bibitem{GM2}
	\script{Goresky, M.; MacPherson, R.}
	{\em Intersection homology II}. Invent. Math. {\bf 71} (1983), 77 129.



\bibitem{H-S} \script{M. Hilsum,  G. Skandalis}
{\em Invariance de la signature \`a coefficients dans
un fibr\'e presque plat}, J. Reine Angew. math. {\bf 423}, 1990,
pages 73-99.

\bibitem{Higson}
\script{Higson, N.} {\em K-Homology and Operators on non compact manifolds.} Web page of the author.

\bibitem{Hunsicker-Mazzeo}
	\script{Hunsicker, E.; Mazzeo, R.}
	{\em Harmonic forms on manifolds with edges.}
	Int. Math. Res. Not.  2005,  no. {\bf 52}, 3229--3272.





\bibitem{Kasparov-contemporary}
	\script{Kasparov, G.}
	{\em Novikov's conjecture on Higher Signatures: The Operator K-Theory Approach.}
	Contem. Math. Vol. {\bf 145} (1993)

\bibitem{Kirwan-Woolf}
	\script{Kirwan, F.; Woolf, J.}
	{\em An introduction to intersection homology theory.}
	Second edition. Chapman \& Hall/CRC, Boca Raton, FL, 2006. xiv+229 pp. 


\bibitem{LLP}
	\script{Leichtnam, E.; Lott, J.; Piazza, P.}
	{\em On the homotopy invariance of higher signatures for manifolds with boundary.}
	J. Differential Geom.  {\bf 54}  (2000),  no. 3, 561--633.


\bibitem{LPJFA}
	\script{Leichtnam, E.; Piazza, P.}
{\em Dirac index classes and the noncommutative spectral flow},
  J. Funct. Anal., {\bf 200} (2003), n.2, 348--400.



\bibitem{LP-Cut}
\script{Leichtnam, E.; Piazza, P.}
{\em Cut-and-paste on foliated bundles.  Spectral geometry of manifolds with boundary and decomposition of manifolds.}  Contemp. Math., 366, Amer. Math. Soc., Providence, RI, 2005, 151--192.


\bibitem{Mish1}
	\script{Mishchenko, A.} {\em Homotopy invariants of
non-simply connected manifolds, I. Rational Invariants}, Math.
USSR-Izvestija, {\bf 4}, 1970, pp. 509-519.




\bibitem{Mosc-Wu-Witt}
	\script{Moscovici, H; Wu, F.}
	{\em Straight Chern Character for Witt spaces.}
	Fields Institute Communications Vol 17 (1997).


\bibitem{PRW}
\script{Pedersen, E; Roe, J; Weinberger, S.} {\em  On the homotopy invariance of the
boundedly controlled analytic signature of  a manifold over an open cone.} Novikov Conjectures, Index Theorems
and Rigidity, Volume 2, London Math. Society Lecture Notes Series 227, 1993.



\bibitem{PS} \script{P. Piazza, T. Schick}.
{\em Bordism, rho-invariants and the Baum-Connes conjecture}
Journal of Noncommutative Geometry, 2007, Vol 1, pp 27-111.


\bibitem{ros-weinberget-at-2}
\script{Rosenberg, J; Weinberger, S.}
{\em The signature operator at 2.}
Topology {\bf 45} (2006), no. 1, 47--63.






\bibitem{Se}
	\script{Siegel, P. H.}
	{\em Witt spaces: a geometric cycle theory for $K{\rm O}$-homology at odd primes.}
	Amer. J. Math.  {\bf 105}  (1983),  no. 5, 1067--1105.

\bibitem{Sullivan}
\script{Sullivan, D.}
	{\em Geometrix Topology: Localization, Periodicity and Galois Symmetry.}
	The 1970 MIT Notes. Springer.
	K-Monographs in Ma	thematics, Volume 8.

\bibitem{Thom}
\script{Thom, R.}
	{\em Les classes caract\'eristiques de Pontrjagin des vari\'et\'es triangul\'ees.}
	1958  Symposium internacional de topolog\'ia algebraica International symposium on algebraic topology  54--67 
	Universidad Nacional Aut\'onoma de M\'exico and UNESCO, Mexico City

\bibitem{Wei}
\script{Weinberger, S.}
{\em Homotopy invariance of {$\eta$}-invariants.}
Proc. Nat. Acad. Sci. U.S.A., {\bf 85}(15):5362--5363, 1988.

\bibitem{Wei-higher-rho}
\script{Weinberger, S.}
 {\em Higher {$\rho$}-invariants},
Tel Aviv Topology Conference: Rothenberg Festschrift
              (1998), Contemp. Math, {\bf 231}, 315--320.


\end{thebibliography}
\end{document}